\newcommand{\Gl}{\mathrm{GL}}
\DeclareMathOperator{\Hom}{\mathrm{Hom}}
\newcommand{\N}{\mathbb{N}}
\newcommand{\alg}{\mathrm{alg}}
\newcommand{\be}{\overline{e}}
\newcommand{\cP}{\mathcal{P}}
\newcommand{\cQ}{\mathcal{Q}}
\DeclareMathOperator{\ch}{\mathrm{char}}
\newcommand{\df}{\mbox{-}}
\newcommand{\K}{\mathbb{K}}
\newcommand{\ledom}{\trianglelefteq}
\newcommand{\mdl}[1]{M_{#1}}
\newcommand{\md}{\mbox{-mod}}
\newcommand{\module}[1]{\ensuremath{M_{#1}}}
\newcommand{\poset}{\mathcal{P}}
\newcommand{\quiver}{\cQ}
\DeclareMathOperator{\rad}{\mathrm{rad}}
\newcommand{\rep}{\df\mathbf{rep}}
\newcommand{\rst}[2]{#1|_{_{#2}}}
\DeclareMathOperator{\sep}{\mathrm{sp}}
\newcommand{\sm}[1]{\ensuremath{\left(\begin{smallmatrix} #1 \end{smallmatrix}\right)}}
\DeclareMathOperator{\supp}{\mathrm{supp}}
\newcommand{\s}[1]{\begin{smallmatrix} #1 \end{smallmatrix}}
\newcommand{\td}[1]{\underline{#1}}
\newcommand{\xx}[2]{\ensuremath{\xi_{ {#1,#2}}} }
\newtheorem{theorem}{Theorem}[section]
\newtheorem{proposition}[theorem]{Proposition}
\theoremstyle{definition}
\newtheorem{definition}[theorem]{Definition}
\theoremstyle{remark}
\numberwithin{equation}{section}
\newcommand{\si}[1]{\small{\emph{#1}}}
\DeclareRobustCommand{\authorthing}{
\begin{center}
	\begin{tabular}{lll}
		Karin Erdmann & Ana Paula Santana &
		Ivan Yudin\\	
\si{Mathematical}
& \si{CMUC, Department of} & \si{CMUC, Department of}\\
\si{Institute}&
\si{Mathematics}&
\si{Mathematics}\\
\si{University of Oxford}
&\si{University of Coimbra}
&\si{University of Coimbra}\\
\si{Oxford}
&\si{Coimbra} & \si{Coimbra}\\
\si{UK} & \si{Portugal} &\si{Portugal}\\
\si{erdmann@maths.ox.ac.uk} & \si{aps@mat.uc.pt} & \si{yudin@mat.uc.pt}
	\end{tabular}
\end{center}
}
\date{}
\author{\authorthing}
\title{Representation type of Borel-Schur algebras}
\begin{document}
\maketitle
\abstract{In our previous work~\cite{finite}, we found all Borel-Schur
algebra of finite representation type. In the present article, we determine which Borel-Schur
algebras of infinite representation type are tame, and which are wild. }
\section{Introduction}

Borel-Schur algebras occur as subalgebras of Schur algebras.
They were introduced by J. A. Green in \cite{green2}. A main result, or even the
motivation, of that work,  is that the Schur algebra has a triangular decomposition with factors an upper and a lower Borel-Schur algebra.

Borel-Schur algebras have shown to be a    powerful
tool for the study of projective resolutions of Weyl modules for the general linear
group  \cite{woodcock_vanishing, woodcock}.
More recently they
played a crucial role in the work of the last two authors~\cite{apsiy} on this problem. Also, in the same paper, Borel-Schur algebras were used to prove the Boltje-Hartmann  conjecture \cite{BH} on
permutational resolutions of (co-)Specht modules.

We fix an infinite field  $\K$.
The Schur algebra $S(n,r)$ is the  $\K$-algebra whose mo\-du\-le
category is equivalent to the category of $r$-homogeneous
polynomial representations of the  general linear group
$ \Gl_n(\K)$. This Schur algebra is finite-dimensional, and it
has an explicit subalgebra $S^+(n,r)$, the (upper) Borel-Schur algebra,   whose module category is equivalent
to the category of $r$-homogeneous polynomial re\-pre\-sen\-tations of
$B^+$, the group of upper triangular matrices in $\Gl_n(\K)$.
The para\-me\-ters of $S^+(n, r)$  which we
have to take into account are  $n$, $r$, and, in addition,
the characteristic of the field $\K$.

Borel-Schur algebras  are basic. They have finite global dimension and a highest weight theory.
There is an explicit formula for the multiplication, but except for small
cases, this is not easy to use.
In \cite{aps},  the second author determined the Ext quiver of a Borel-Schur
algebra. She also proved some results useful for the  construction of almost split sequences of simple
modules.
More recently this was continued in \cite{finite}, where it was also 
determined precisely
which Borel-Schur algebras are of finite representation type.
The answer is:

\begin{theorem}[{\cite{finite}}]\label{nossa}
Consider the Borel-Schur algebra $S^+(n,r)$ over an algebraically closed
field~$\K$. Then $S^+(n,r)$ has finite representation type if and only if
\begin{enumerate}[1)]
\item $n=2$ and one of the following alternatives holds:
\begin{enumerate}
\item $\ch(\K)=0$;
 \item$\ch(\K)=2$ and $r\leq 3$;
 \item $\ch(\K)=3$ and $r\leq 4$;
 \item $\ch(\K)=p\geq 5$ and $r\leq p$;
 \end{enumerate}
\item $n \ge 3$ and $r=1$. 
\end{enumerate}
\end{theorem}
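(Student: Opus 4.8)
\medskip
\noindent\textbf{Proof strategy.}
The starting point is the explicit description, due to the second author \cite{aps}, of the Gabriel quiver and admissible relations of $S^+(n,r)$: the vertices are the weights $\lambda\in\N^n$ with $|\lambda|=r$, and both the number of arrows and the form of the relations are governed by binomial coefficients read modulo $\ch\K$. The statement splits into \emph{sufficiency} --- every triple $(n,r,\ch\K)$ in the list yields finite representation type --- and \emph{necessity} --- every other triple yields infinite type. For necessity the soft input is a monotonicity principle: (i) for $1\le m\le n$ one has an isomorphism $S^+(m,r)\cong eS^+(n,r)e$, with $e$ the sum of the primitive idempotents attached to the weights supported in $\{1,\dots,m\}$ (the inflation along $\Gl_m\le\Gl_n$), so that infinite representation type of $S^+(m,r)$ forces it for $S^+(n,r)$; and (ii) a comparison --- which I expect to be the more delicate point --- showing that if $S^+(n,r_0)$ has infinite type then so does $S^+(n,r)$ for all $r\ge r_0$, by realising the quiver with relations of $S^+(n,r_0)$ as a convex subcategory, i.e. an algebra $e'S^+(n,r)e'$ for a convex set of weights, of that of $S^+(n,r)$.

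Granting this, necessity reduces to exhibiting infinite representation type for the minimal excluded algebras: $S^+(3,2)$ in every characteristic, and the threshold cases $S^+(2,p+1)$ for $p=\ch\K\ge5$, $S^+(2,5)$ for $p=3$, and $S^+(2,4)$ for $p=2$. For each of these finitely many algebras I would produce a genuine one-parameter family of pairwise non-isomorphic indecomposables, by locating inside the quiver with relations a convex subcategory that is either tame hereditary of extended Dynkin type or a string algebra carrying band modules, and reading off the family there.

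For sufficiency, the case $n\ge3$, $r=1$ is immediate: $S^+(n,1)$ is the path algebra of the linearly oriented quiver $A_n$, equivalently the algebra of upper triangular $n\times n$ matrices over $\K$, which has finite representation type. For $n=2$ one works directly with the quiver of $S^+(2,r)$. In characteristic $0$ this quiver is linearly oriented $A_{r+1}$ with uniserial indecomposable projectives, so $S^+(2,r)$ is a Nakayama algebra and hence of finite representation type for every $r$. In characteristic $p$ the binomial coefficients modulo $p$ introduce extra ``divided power'' arrows and relations (the new arrows skipping blocks of $p$-power length); for the finitely many listed small values of $r$ one verifies, using the explicit relations --- by knitting the Auslander--Reiten quiver to completion, by recognising the algebra as special biserial of finite type, or by passing to a representation-finite Galois cover --- that no one-parameter family of indecomposables occurs, whence finite representation type by the tame/wild dichotomy over the algebraically closed field $\K$.

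The main obstacle is twofold. First, the bookkeeping behind the $r$-monotonicity in (ii) and its mirror on the sufficiency side: one must pin down exactly how the vanishing pattern of binomial coefficients modulo $p$ (Lucas' theorem, i.e. base-$p$ digits) reshapes the quiver and relations of $S^+(2,r)$ as $r$ crosses $p$, $2p$, $p^2$, \dots, so that the first representation-infinite algebra in the family appears precisely at $r=p+1$ (respectively $5$ when $p=3$, and $4$ when $p=2$) --- the small primes being exactly where these digit conditions behave irregularly. Second, certifying infinite representation type of $S^+(3,2)$ uniformly in the characteristic, which --- in contrast with the one-dimensional $n=2$ quivers --- forces one to analyse a genuinely two-dimensional quiver shape and to isolate the correct infinite-type convex subcategory inside it.
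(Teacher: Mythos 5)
Note first that Theorem~\ref{nossa} is imported from \cite{finite}; the present paper contains no proof of it, only the supporting reductions. Measured against those, your strategy is the right one and matches the machinery the paper itself builds and attributes to \cite{finite}: your monotonicity in $n$ and in $r$ is exactly Proposition~\ref{idempotent} ($eS^+(n,t)e\cong S^+(m,s)$ for $m\le n$, $s\le t$ --- proved there for the $r$-step by prepending the entry $1$ to multi-indices rather than by a convexity argument), and infinite type lifts through such idempotents because $Ae\otimes_{eAe}-$ preserves isomorphism classes and $e(Ae\otimes_{eAe}M)\cong M$ singles out a distinguished indecomposable summand (the argument of Proposition~\ref{subalgebra}). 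The boundary algebras you isolate ($S^+(3,2)$ in all characteristics, $S^+(2,p+1)$ for $p\ge 5$, $S^+(2,5)$ for $p=3$, $S^+(2,4)$ for $p=2$) are exactly the ones this paper revisits in Sections 4--6, and the sufficiency tools you name (knitting the AR quiver, special biserial recognition) are those used in \cite{finite}, e.g.\ the complete AR quiver of $S^+(2,4)$ for $p=3$ reproduced in Figure~\ref{ARQ}.

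Two points in your write-up need repair. First, for $\ch(\K)=p\ge 5$ the sufficiency claim covers $r\le p$ for \emph{every} prime $p$; this is not ``finitely many listed small values of $r$'', and a case-by-case verification does not terminate. The fix is uniform: for $r\le p-1$ one has $\lambda_2\le r<p$ for all weights, so only arrows of type $\alpha_0$ occur and the relations $(\alpha_0^p)_\lambda=0$ are vacuous, whence $S^+(2,r)$ is hereditary of type $A_{r+1}$; for $r=p$ exactly one extra arrow of type $\alpha_1$ from $(0,p)$ to $(p,0)$ appears together with the single relation $(\alpha_0^p)_{(0,p)}=0$, giving one gentle algebra, uniform in $p$, whose finite type is checked once. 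Your Lucas-theorem remark gestures at this but the argument as written does not cover the infinite family. Second, ``no one-parameter family of indecomposables occurs, whence finite representation type by the tame/wild dichotomy'' is not a valid certification scheme: the dichotomy separates tame from wild among infinite-type algebras and provides no finite procedure for excluding one-parameter families. What actually certifies finite type is either Auslander's theorem (a finite connected component of the AR quiver containing all projectives exhausts the indecomposables, so a knitting that closes up is a proof) or the Wald--Waschb\"{u}sch classification \cite{wald} for special biserial algebras (finite type iff there are no bands); both are already on your list, so the dichotomy step should simply be dropped in favour of these.
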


This leaves to identify  when a Borel-Schur algebra has tame representation type, and this is answered completely in
this paper.  Our main result is  as follows.

\begin{theorem} Consider the Borel-Schur algebra $S^+(n,r)$ over an algebraically closed field~$\K$. Suppose that  $S^+(n,r)$ is of infinite type. Then
$S^+(n,r)$ is tame if  
\begin{enumerate}[(a)]
\item $n=2$, $\ch\K=3$, and $r=5$;
\item $n=3$ and $r=2$.
\end{enumerate}
Otherwise $S^+(n,r)$ is wild.
\label{main}
\end{theorem}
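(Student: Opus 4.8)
\emph{Overview and tools.} By Theorem~\ref{nossa}, the Borel--Schur algebras of infinite representation type are exactly those with $n=2$ and one of $\ch\K=2$, $r\ge 4$; $\ch\K=3$, $r\ge 5$; $\ch\K=p\ge 5$, $r\ge p+1$; together with all $S^+(n,r)$ with $n\ge 3$, $r\ge 2$. Thus Theorem~\ref{main} asserts that, among these, precisely $S^+(2,5)$ in characteristic $3$ and $S^+(3,2)$ are tame. For the wildness assertions I would rely on two standard facts: if $A\twoheadrightarrow A/I$, then $A/I$ wild implies $A$ wild, since $\mathrm{mod}\,A/I$ is a full exact subcategory of $\mathrm{mod}\,A$; and if $e\in A$ is an idempotent, then $eAe$ wild implies $A$ wild, because $N\mapsto Ae\otimes_{eAe}N$ is a fully faithful functor $\mathrm{mod}\,eAe\to\mathrm{mod}\,A$ (its composite with $M\mapsto eM$ is naturally the identity), hence a representation embedding. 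Combining the two, a ``subquotient'' $eAe/I$ being wild already forces $A$ to be wild.

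\emph{Reduction to minimal cases.} I would first prove, in fixed characteristic, the monotonicity statements ``$S^+(n,r)$ wild $\Rightarrow$ $S^+(n+1,r)$ wild'' and ``$S^+(n,r)$ wild $\Rightarrow$ $S^+(n,r+1)$ wild''. For the former, the sum $e$ of the weight idempotents $\xi_\lambda$ with $\lambda_{n+1}=0$ satisfies $e\,S^+(n+1,r)\,e\cong S^+(n,r)$ (this is restriction from $B^+_{n+1}$ to $B^+_{n}$), and the idempotent fact applies. For the latter one uses the explicit multiplication of $S^+(n,r)$ to produce an idempotent $e$ with $e\,S^+(n,r+1)\,e\cong S^+(n,r)$: a suitable sub-poset of $\Lambda(n,r+1)$ is a translate of $\Lambda(n,r)$ on which the quiver of~\cite{aps} and its relations are reproduced verbatim. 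Granting this, the wild part of Theorem~\ref{main} follows once wildness is checked in the minimal infinite-type cases not on the tame list, namely $S^+(2,4)$ with $\ch\K=2$, $S^+(2,6)$ with $\ch\K=3$, $S^+(2,p+1)$ with $\ch\K=p\ge 5$, $S^+(3,3)$, and $S^+(4,2)$, because every infinite-type pair $(n,r)$ not on the tame list dominates one of these in the product order on $(n,r)$.

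\emph{Wildness of the minimal cases.} For each of these I would write down the Ext quiver from~\cite{aps} and the relations obtained from the multiplication rule, choose an idempotent $e$ cutting the quiver to a small convex subquiver, and pass to a quotient $e\,S^+(n,r)\,e/I$ retaining only a controlled set of relations. The resulting small algebra is then recognized as wild, either by matching it with the classification of minimal wild algebras (a wild hereditary path algebra of a suitable star or subspace quiver; a wild two- or three-vertex algebra such as $\K(\bullet\rightrightarrows\bullet)$ modulo one relation; a wild one-point (co)extension of a tame concealed algebra), or by exhibiting directly a full exact embedding of $\mathrm{rep}\,Q$ for a wild quiver $Q$. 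The family $S^+(2,p+1)$, $p\ge 5$, should be dispatched by one argument with $p$ as a parameter.

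\emph{Tameness of $S^+(3,2)$ and of $S^+(2,5)$ in characteristic $3$; main obstacle.} Each of these two algebras has six simple modules and finite global dimension. I would present each by quiver and relations via~\cite{aps} and the multiplication rule, and then identify it with an algebra whose tameness is known --- I expect a tilted algebra of Euclidean type, a tubular algebra, or a special biserial algebra --- possibly after passing to a Galois covering and invoking the fact that an algebra with a locally support-finite, locally representation-finite or locally tame Galois cover is tame. Theorem~\ref{nossa} then excludes finite type, so each algebra is tame. The serious difficulty throughout lies in the two previous steps: the multiplication of $S^+(n,r)$ is explicit but combinatorially heavy, so an overlooked relation could make a candidate wild subquotient tame, while a missing relation could make a candidate tame algebra wild. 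The crux is therefore the careful determination of the relations in the handful of base cases and their chosen subquotients, together with the exact matching of these with the relevant entries of the classifications of minimal wild algebras and of tame tilted/tubular/biserial algebras.
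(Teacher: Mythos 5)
Your high-level architecture coincides with the paper's: reduce via the idempotent result $eS^+(n,t)e\cong S^+(m,s)$ (Proposition~\ref{idempotent}) and the two wildness-lifting facts (Propositions~\ref{quotient} and~\ref{subalgebra}) to a short list of minimal cases, prove those wild, and treat the two tame algebras separately. One small imprecision: $Ae\otimes_{eAe}-$ is faithful but not in general full, so it is not a ``fully faithful functor'' as you claim; the paper instead shows it preserves isomorphism classes (because post-composing with $e(-)$ recovers the identity) and invokes the criterion that an isomorphism-preserving functor of bimodule type already certifies wildness.

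There are, however, two genuine methodological gaps. First, on the wild side, your plan of ``choose an idempotent cutting to a convex subquiver and pass to a quotient, then match with a classification'' does not suffice for the two cases $S^+(2,6)$ in characteristics $3$ and $5$. There the paper needs a Galois-covering argument (Theorems~\ref{gabriel} and~\ref{covering_criterion}): one passes to a $\Sigma_2$-cover of the quiver, finds a subquiver of the cover matching Ringel's list of minimal wild quivers with one relation, and transports wildness back down. This also means your hope of handling $S^+(2,p+1)$, $p\ge5$, by ``one argument with $p$ as parameter'' will fail at $p=5$: the subquiver~\eqref{quiver} used for $p\ge7$ is not embeddable (its vertex labels collide in $\Lambda(2,6)$), and $p=5$ has to be treated by the separate covering argument. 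Second, and more seriously, on the tame side your candidate identifications (tilted of Euclidean type, tubular, special biserial, perhaps via a Galois cover) do not match what actually works. For $S^+(3,2)$ the algebra itself is not special biserial (its multiplication is non-monomial, e.g.\ $\xx{12}{22}\cdot\xx{22}{23}=\xx{12}{23}+\xx{12}{32}$); the paper instead constructs a degeneration $\mu_0$ of its product that is special biserial and invokes Geiss's theorem that non-wildness is preserved under degeneration (Theorem~\ref{geiss}). For $S^+(2,5)$ in characteristic~$3$ the method is entirely different and not on your list: one writes $S^+(2,5)$ as a one-point extension $A[M]$ of $A=S^+(2,4)$, takes the subquiver $\Gamma_M$ of the Auslander--Reiten quiver of $A$ on which $\Hom_A(M,-)$ is non-zero (and one-dimensional), obtains a finite poset $\poset_M$ whose representation type equals that of $A[M]$ (Proposition~\ref{nazarova2}, after Ringel), and checks via Nazarova's classification (Theorem~\ref{nazarova}) that $\poset_M$ contains no critical wild subposet. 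Without the degeneration idea and the one-point-extension/poset idea, the two tameness proofs cannot be carried out along the lines you sketch.
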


The above results can be visualized as follows:
\begin{equation*}
\begin{gathered}
\ch\K =0\\
\xygraph{
	!{<0cm,0cm>;<1.7ex,0cm>:<0cm,1.7ex>::}
:@{-}[rdrd]
:@{-}[uu]
:@{}|(0.3){r}|(0.7){n}[ldld]
:@{-}[rrrrrrrrrr]
[lllllllu]*{1}
[rr]*{2}
[rr]*{3}
[rr]*{\ldots}
[llllllld]:@{-}[dddddddd]
[uuuuuuul]*{2}
[dd]*{3}
[dd]*{4}
[dd]*{\ldots}
[uuuuuurrrr]*{finite}
[dl]:@{-}[rrrrrr]
[llllll]:@{-}[dddddd]
[uuuu]:@{-}[rr]:@{-}[uu]
[ld]*{t}
[rrrddd]*{wild}
}
\end{gathered}
\quad\quad\quad\quad\quad
\begin{gathered}
\ch\K =2\\
\xygraph{
	!{<0cm,0cm>;<1.7ex,0cm>:<0cm,1.7ex>::}
:@{-}[rdrd]
:@{-}[uu]
:@{}|(0.3){r}|(0.7){n}[ldld]
:@{-}[rrrrrrrrrrrr]
[lllllllllu]*{1}
[rr]*{2}
[rr]*{3}
[rr]*{4}
[rr]*{\ldots}
[llllllllld]:@{-}[dddddddd]
[uuuuuuul]*{2}
[dd]*{3}
[dd]*{4}
[dd]*{\ldots}
[uuuuuurrrr]*{finite}
[dl]:@{-}[rrrr]
[llll]:@{-}[dddddd]
[uuuu]:@{-}[rr]:@{-}[uu]
[ld]*{t}
[rrrddd]*{wild}
[uuuu]:@{-}[uu]
}
\end{gathered}
\end{equation*}
\bigskip
\begin{equation*}
\begin{gathered}
\ch\K =3\\
\xygraph{
	!{<0cm,0cm>;<1.7ex,0cm>:<0cm,1.7ex>::}
:@{-}[rdrd]
:@{-}[uu]
:@{}|(0.3){r}|(0.7){n}[ldld]
:@{-}[rrrrrrrrrrrrrrrr]
[lllllllllllllu]*{1}
[rr]*{2}
[rr]*{3}
[rr]*{4}
[rr]*{5}
[rr]*{6}
[rr]*{\ldots}
[llllllllllllld]:@{-}[dddddddd]
[uuuuuuul]*{2}
[dd]*{3}
[dd]*{4}
[dd]*{\ldots}
[uuuuuurrrr]*{finite}
[dl]:@{-}[rrrrrrrr]
[llllllll]:@{-}[dddddd]
[uuuu]:@{-}[rr]:@{-}[uu]
[ld]*{t}
[rrrddd]*{wild}
[uuuurr]:@{-}[uu]
[rd]*{t}
[rd]:@{-}[uu]
}
\end{gathered}
\quad\quad
\begin{gathered}
\ch\K =p\ge 5\\
\xygraph{
	!{<0cm,0cm>;<1.7ex,0cm>:<0cm,1.7ex>::}
:@{-}[rdrd]
:@{-}[uu]
:@{}|(0.3){r}|(0.7){n}[ldld]
:@{-}[rrrrrrrrrrrrrrrr]
[lllllllllllllu]*{1}
[rr]*{2}
[rr]*{\ldots}
[rr]*{p\phantom{1}}
[rrr]*{p+1}
[rrr]*{\ldots}
[llllllllllllld]:@{-}[dddddddd]
[uuuuuuul]*{2}
[dd]*{3}
[dd]*{4}
[dd]*{\ldots}
[uuuuuurrrr]*{finite}
[dl]:@{-}[rrrrrr]
[llllll]:@{-}[dddddd]
[uuuu]:@{-}[rr]:@{-}[uu]
[ld]*{t}
[rrrddd]*{wild}
[uuuurr]:@{-}[uu]
}
\end{gathered}
\end{equation*}
\bigskip

\noindent To prove Theorem~\ref{main}, we reduce the problem by idempotent methods: one has
to show that only few algebras are wild, and that the algebras listed in the theorem are tame.
To prove that an algebra is wild can be   done by
relating its module category to that of some known wild algebra.
Our main method is based on coverings (see Section 2).

This leaves to prove that the two remaining algebras are tame.
We show  that $S^+(3, 2)$ degenerates to a special biserial algebra, which is known to be tame. Then  a result from  \cite{geiss} implies that the algebra
$S^+(3,2)$ is tame.  Our proof works for arbitrary characteristic, although the algebra structure for characteristic $2$ is
different.

Our proof  that  $S^+(2, 5)$ in characteristic $3$ is tame is very different from the proof of the previous case. This is done by exploiting  representation theory of posets, and using the fact that
the representation type of posets is completely understood.
To follow this route, it is crucial   that  $S^+(2, r)$ is a one-point extension of $S^+(2, r-1)$ (see Section 6 for details).

In general, given  a one-point extension $A[M]=\left(\begin{matrix} A & M\cr 0&\K\end{matrix}\right)$, if
$A$ has finite type and $M$ is suitable, one can construct a finite poset from the
Auslander-Reiten quiver of $A$. Moreover, the representation type of this poset is the same as
the representation type of the algebra $A[M]$.
We use this when $A= S^+(2, 4)$, $A[M]=S^+(2, 5)$, and the characteristic of
the base field is $3$.
It was proved in~\cite{finite}  that the algebra $ S^+(2, 4)$  has finite type  by explicitly computing its Auslander-Reiten quiver. Now we take this quiver, compute the
relevant poset, and then prove that it has tame type (see Section 6).
\medskip

We note that the representation type of Schur algebras $S(n,r)$ (and also of their q-analogs) has been classified,
but the methods used  are different (see \cite{DEMN} and \cite{EN}), and also as
far as we can see there is no connection with the techniques we use in the
present article.
This makes the problem we describe next rather intriguing. Besides Borel-Schur
algebras, Green defined in~\cite{green2} a subalgebra $S(G,r)$ of
$S(n,r)$ for every subgroup $G$ of $\Gl_n(\K)$. This subalgebra coincides
with $S^+(n,r)$ if $G=B^+$  and with $S(n,r)$ if $G=\Gl_n(\K)$. 
Of course, $B^+$ and $\Gl_n(\K)$ are extremal elements of the family of
parabolic subgroups $P_\lambda$ in $\Gl_n(\K)$.  
It would be interesting to determine the representation type of the algebras
$S(P_\lambda,r)$ with arbitrary $P_\lambda$.
\medskip

 The paper is organized as follows. In Section~2 we introduce the techniques we will use on the study of wild type. 
 Section~3 is  dedicated to Borel-Schur algebras. We introduce  basic facts  and prove  that for every positive integer $m\le n$ and $s\le r$ there is an idempotent $e$ in $S^+(n,r)$
such that
$\,e S^+(n,r) e \cong S^+(m,s)\,$. This  result will be crucial for our
classification. In Section~4 we classify the Borel-Schur algebras of wild
representation type using the covering techniques described in  Section~2.
Using degeneration techniques due to Gabriel~\cite{gabrieldegen} and  Geiss~\cite{geiss}, we prove in Section~5 that $S^+(3,2)$  is tame. Section~6 contains the proof that   $S^+(2, 5)$  is tame over fields of characteristic $3$.
As was mentioned above this is done using representation theory of posets.

For background on Schur algebras and Borel-Schur algebras we refer to~\cite{green} and~\cite{green2}.
Background on representation theory of algebras can be found in \cite{ringel1099}, or other text books.

We assume throughout that the field $\K$ is algebraically closed, and that all quivers are finite.

\section{Preliminaries on wild representation type}
Let $A$ be a finite dimensional algebra over $\K$. We will write $A\md$ for the category of finite dimensional left $A$-modules.
The algebra $A$ is said to have \emph{finite representation type} if there are only finitely many
isomorphism classes of finite dimensional indecomposable modules. Otherwise $A$ has   \emph{infinite representation type}.  The famous Drozd Dichotomy
Theorem, proved in~\cite{drozd}, divides algebras of  infinite representation type  into two mutually exclusive classes: algebras of  \emph{tame type } and algebras of \emph{wild type}.
The algebra  $A$ is tame  if it has infinite type and, for every dimension $d \geq 0$, all, but a finite number of, isomorphism classes of indecomposable $A$-modules of dimension $d$ can be parametrised  by a finite number of 1-parameter families.

To define wild we need a further  notion.

Given another $\K$-algebra $B$,
a functor $F\colon B\md \to A\md$ is called a \emph{ representation embedding} if it
preserves indecomposability and isomorphism classes. More formally, $F$ is a
representation embedding if for every indecomposable object $X\in B\md $ the
object $F(X)$ is indecomposable in $A\md $, and if $F(Y) \cong F(Z)$ for some
$Y$ and $Z$ in $B\md $, then $Y\cong Z$.

An algebra $A$ is \emph{wild} if there is an $A$-$\K\left\langle u,v
\right\rangle$-bimodule $Z$, free of finite rank as a right $\K\left\langle u,v
\right\rangle$-module, such that the functor $Z \otimes_{\K\left\langle u,v
\right\rangle}- \colon \K\left\langle u,v \right\rangle\md \to A\md$ is a
representation embedding.

It is shown in~\cite[Proposition~22.4]{bautista} that  to prove that $A$ is wild it is enough to see that the  functor $Z\otimes_{\K\left\langle u,v
\right\rangle}-$ preserves isomorphism classes.
As an  immediate corollary we get that if $B$ is a wild algebra and there is an
isomorphisms preserving functor $B\md\to A\md$, then $A$ has wild representation
type.

Since full and faithful  functors preserve isomorphism classes the following result is
obvious.
\begin{proposition}
\label{quotient}
Let $A$ be a
finite dimensional algebra over $\K$. Suppose there is an ideal  $I$ of  $A$ such
that $A/I$ has wild representation type. Then $A$ is a wild algebra.
\end{proposition}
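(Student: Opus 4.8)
The plan is to produce an isomorphism-classes-preserving functor $\K\langle u,v\rangle\md\to A\md$ and then invoke the strengthened criterion from \cite[Proposition~22.4]{bautista} quoted above: to certify that $A$ is wild it is enough that the relevant functor preserves isomorphism classes, without separately checking preservation of indecomposability. Since $A/I$ is wild by hypothesis, fix an $(A/I)$-$\K\langle u,v\rangle$-bimodule $W$, free of finite rank as a right $\K\langle u,v\rangle$-module, such that $W\otimes_{\K\langle u,v\rangle}-\colon\K\langle u,v\rangle\md\to(A/I)\md$ is a representation embedding.

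The main step is to transport this along the canonical projection $\pi\colon A\to A/I$. Restriction of scalars along $\pi$ gives a functor $\pi^*\colon(A/I)\md\to A\md$, under which an $A/I$-module becomes an $A$-module on which $I$ acts as zero. For any $A/I$-modules $M,N$, an $A$-linear map $M\to N$ is automatically $A/I$-linear, since it must vanish on $IM=0$; thus $\Hom_A(\pi^*M,\pi^*N)=\Hom_{A/I}(M,N)$, so $\pi^*$ is full and faithful, and in particular sends isomorphic modules to isomorphic ones and non-isomorphic to non-isomorphic.

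Finally, regard $W$ as an $A$-$\K\langle u,v\rangle$-bimodule via $\pi$; its right $\K\langle u,v\rangle$-module structure is unchanged, so it is still free of finite rank on that side, and $W\otimes_{\K\langle u,v\rangle}-\colon\K\langle u,v\rangle\md\to A\md$ coincides with the composite $\pi^*\circ\bigl(W\otimes_{\K\langle u,v\rangle}-\bigr)$. A composite of a representation embedding with a full and faithful functor preserves isomorphism classes, so by the criterion above $A$ is wild. I do not expect a genuine obstacle here; the only points needing a moment of care are that restriction along $\pi$ leaves the freeness and finite rank of the right module structure untouched, and that, thanks to \cite{bautista}, one really does not have to recheck preservation of indecomposability by hand.
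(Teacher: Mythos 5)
Your proof is correct and follows essentially the same route as the paper, which simply declares the result ``obvious'' after observing that full and faithful functors preserve isomorphism classes and invoking \cite[Proposition~22.4]{bautista}. You have spelled out the details that the paper glosses over, in particular that $W$ restricted along $\pi$ retains its free finite-rank right $\K\langle u,v\rangle$-module structure and that $W\otimes_{\K\langle u,v\rangle}-$ factors through the full and faithful $\pi^*$, which is exactly what one needs for the Bautista criterion to apply.
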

It is not in general true that if $B$ is a wild subalgebra of an algebra
$A$, then $A$ is also wild. Nevertheless,  the following partial result
in this direction holds.
\begin{proposition}
\label{subalgebra}
Let $A$ be a finite dimensional algebra. Suppose there is an idempotent $e$  in $A$ such that
$eAe$ is wild. Then $A$ has  wild representation type.
\end{proposition}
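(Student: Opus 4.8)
The plan is to reduce the wildness of $A$ to the wildness of $eAe$ by exhibiting an isomorphism-class-preserving functor $eAe\md \to A\md$, which by the discussion preceding Proposition~\ref{quotient} is enough. The natural candidate is the functor $Ae \otimes_{eAe} -\colon eAe\md \to A\md$; equivalently one can work with its behaviour on the level of the bimodule witnessing wildness. So first I would take an $eAe$-$\K\langle u,v\rangle$-bimodule $Z$, free of finite rank over $\K\langle u,v\rangle$, such that $Z\otimes_{\K\langle u,v\rangle}-$ is a representation embedding; this exists since $eAe$ is wild. Then I would form $Z' := Ae\otimes_{eAe} Z$, which is an $A$-$\K\langle u,v\rangle$-bimodule. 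The first routine check is that $Z'$ is still free of finite rank as a right $\K\langle u,v\rangle$-module: $Ae$ is a finite-dimensional (hence finitely generated) right $eAe$-module, and $Z$ is $\K\langle u,v\rangle$-free of finite rank, so $Z'$ is a finite direct sum of copies of $Z$ as a right $\K\langle u,v\rangle$-module — this uses only that $Ae$ is projective, or more cheaply just counts dimensions over $\K\langle u,v\rangle$ after choosing a $\K$-basis of $Ae$.

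The heart of the argument is to show that $Z'\otimes_{\K\langle u,v\rangle}-\colon \K\langle u,v\rangle\md \to A\md$ preserves isomorphism classes. There is a natural isomorphism of functors
\begin{equation*}
Z'\otimes_{\K\langle u,v\rangle}- \;=\; (Ae\otimes_{eAe} Z)\otimes_{\K\langle u,v\rangle}- \;\cong\; (Ae\otimes_{eAe} -)\circ (Z\otimes_{\K\langle u,v\rangle}-),
\end{equation*}
so it suffices to know that $Ae\otimes_{eAe}-\colon eAe\md \to A\md$ preserves isomorphism classes. This is the standard fact that, writing $G = eAe$ and $P = Ae$ (a projective left $A$-module with $\End_A(P)^{\mathrm{op}} \cong eAe$), the functor $P\otimes_{eAe}-$ is fully faithful: its composition with $e(-) = \Hom_A(P,-) = eA\otimes_A -$ gives $eAe\otimes_{eAe}- \cong \id$, and more to the point the unit $M \to e(Ae\otimes_{eAe} M) = eAe\otimes_{eAe} M$ is an isomorphism for all $M$ since $eA\otimes_A Ae \cong eAe$. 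A fully faithful functor preserves isomorphism classes, so $Z'\otimes_{\K\langle u,v\rangle}-$ does too, being a composite of an iso-preserving functor (the representation embedding for $eAe$) followed by a fully faithful one. Hence $A$ is wild.

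The step I expect to require the most care is the identification $eA\otimes_A Ae\cong eAe$ and, through it, the verification that the unit of the adjunction $(Ae\otimes_{eAe}-,\; e(-))$ is a natural isomorphism — i.e.\ that $Ae\otimes_{eAe}-$ is genuinely fully faithful and not merely faithful. This is where one actually uses that $e$ is idempotent (so that $eA\otimes_A Ae$ collapses to $eAe$ rather than to some larger object), and it is the only place where anything beyond formal nonsense about bimodules is needed. Everything else — freeness over $\K\langle u,v\rangle$, associativity of tensor products, the fact that fully faithful functors preserve iso-classes — is routine. An alternative, even shorter route avoiding the bimodule bookkeeping altogether: invoke the corollary stated in the excerpt (if $B$ is wild and there is an isomorphism-preserving functor $B\md\to A\md$, then $A$ is wild) directly with $B = eAe$ and the functor $Ae\otimes_{eAe}-$, once its full faithfulness is established; this is the form I would actually write up.
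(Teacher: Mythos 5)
Your proposal is correct and follows essentially the same route as the paper: both pass the wildness witness for $eAe$ through the induction functor $Ae\otimes_{eAe}-$. The paper simply verifies isomorphism-class preservation directly via $M\cong eAe\otimes_{eAe}M = e(Ae\otimes_{eAe}M)$, whereas you dress this up as full faithfulness of $Ae\otimes_{eAe}-$ (which is true, but more than what is needed); your "alternative, even shorter route" at the end is exactly the paper's proof.
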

\begin{proof}
The induction functor $M \mapsto Ae\otimes_{eAe} M$ from
$eAe\md$ to $A\md$ preserves isomorphism classes. In fact,
suppose $Ae \otimes_{eAe} M \cong Ae \otimes_{eAe} N$ for some $M$, $N\in
eAe\md$. Then
\begin{equation*}
M \cong eAe\otimes_{eAe} M = e (Ae \otimes_{eAe} M) \cong e (Ae
\otimes_{eAe} N) \cong N.\qedhere
\end{equation*}
\end{proof}
We will now describe further sufficiency criteria for wildness of basic
algebras in terms of their quivers. For this we need some notation.

For a (finite) quiver $\quiver$ and a $\K$-algebra $B$, we denote by
$\quiver\rep_B$ the category of representations $((V_x)_{x\in \quiver_0},
(V_\alpha)_{\alpha \in \quiver_1})$ such that $V_x$ is a finitely generated free
$B$-module for every $x \in \quiver_0$. Given a collection $P$ of paths in $\quiver$ with common source
and target, we say that $\sum_{p \in P} b_p p$, with $b_p\in B$, is a
\emph{relation defined over $B$}. For a collection $R$ of relations  defined
over $B$, we write $(\quiver,R)\rep_B$ for the full subcategory of
$\quiver\rep_B$ whose objects are the representations on which every relation in
$R$ vanishes. 
We suppress $B$ when it coincides with the base field $\K$.
The objects of $(\quiver,R)\rep$ are of course just  \emph{finite dimensional
representations of $(\quiver,R)$}.
 Evidently $(\quiver,R)\rep$ is equivalent to the category of
finite dimensional modules over $\K\quiver/ \left\langle R \right\rangle$, where
$\K\quiver$ is the path algebra of $\quiver$ and $\left\langle R
\right\rangle$ the ideal generated by $R$ in this algebra.

The algebra $\K\quiver/ \left\langle R \right\rangle$ is wild if and only if 
 there exists  $Z \in (\quiver,R)\rep_{\K\left\langle u,v
\right\rangle}$ such that the
 functor \begin{equation*}Z \otimes_{\K\left\langle u,v
\right\rangle} - \colon
{\K\left\langle u,v
\right\rangle}\md \to (\quiver,R)\rep\end{equation*}
defined by
\begin{equation*}
\left( Z\otimes_{\K\left\langle u,v
\right\rangle} V \right)_x = Z_x \otimes_{\K\left\langle u,v
\right\rangle} V_,
\quad
\left( Z\otimes_{\K\left\langle u,v
\right\rangle} V \right)_a = Z_a \otimes_{\K\left\langle u,v
\right\rangle} V,\,\,\,\mbox{all} \,\,\,x\in \quiver_0, \,\alpha \in \quiver_1,
\end{equation*}
is a representation
embedding.
We say 
 that $(\quiver, R)$ is wild if  the corresponding path algebra  is wild.

Let $(\quiver, R)$ be a quiver with relations defined over $B$ and $\quiver'$ a subquiver of
$\quiver$. Suppose $r = \sum_p a_p p$ is a relation in $\quiver$. We define the restriction $\rst{r}{\quiver'}$ of $r$ to $\quiver'$ by
$\rst{r}{\quiver'}=\sum\limits_{p
\mbox{ in } \quiver'} a_p p$. In particular, if the initial or the final vertex of
$r$ is not in $\quiver'$ then $\rst{r}{\quiver'}=0$.
 Denote by $\rst{R}{\quiver'}$ the collection $\left\{\,
\rst{r}{\quiver'}
\,:\, r\in R,\ \rst{r}{\quiver'}\not=0
\right\}$ of relations in $\quiver'$. If $V$ is a representation of
$(\quiver',\rst{R}{\quiver'})$ over $B$ then, following~\cite{karin_book}, we define the \emph{extension-by-zero} representation
$\widetilde{V}$ of $(\quiver, R)$ over $B$ by
\begin{equation*}
\begin{aligned}
\widetilde{V}_x & =
\begin{cases}
V_x, & x\in \quiver'\\
0, & \mbox{otherwise}
\end{cases} &\quad \quad
\widetilde{V}_a & =
\begin{cases}
V_\alpha, & \alpha \in \quiver'\\
0, & \mbox{otherwise}
\end{cases}
\end{aligned}
\end{equation*}
for every vertex $x$ and every arrow $\alpha$ in $\quiver$.
It is not difficult to see that  the correspondence $V \mapsto
\widetilde{V}$ defines a full and faithful functor  from $(\quiver',
\rst{R}{\quiver'})\rep_B$ to
$(\quiver,R)\rep_B$. This implies the following result.
\begin{theorem}\label{subquiver}
Let $\quiver$ be a quiver and $R$ a set of relations defined over $\K$ in $\quiver$. Suppose
$\quiver'$ is a subquiver of $\quiver$ such that $(\quiver',\rst{R}{\quiver'})$ is wild. Then
$(\quiver,R)$ is wild.
\end{theorem}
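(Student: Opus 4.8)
The plan is to package everything into the extension-by-zero functor $\widetilde{(-)}\colon(\quiver',\rst{R}{\quiver'})\rep\to(\quiver,R)\rep$ that has just been constructed, and feed it into the wildness criteria recorded above. Recall that $(\quiver,R)\rep$ is equivalent to $(\K\quiver/\langle R\rangle)\md$, and $(\quiver',\rst{R}{\quiver'})\rep$ to $(\K\quiver'/\langle\rst{R}{\quiver'}\rangle)\md$; the hypothesis says the latter algebra is wild. By the corollary to \cite[Proposition~22.4]{bautista} noted above, it then suffices to exhibit an isomorphism-class preserving functor from $(\K\quiver'/\langle\rst{R}{\quiver'}\rangle)\md$ to $(\K\quiver/\langle R\rangle)\md$.

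The functor $\widetilde{(-)}$ is exactly such a functor. It was shown above to be full and faithful, and a full and faithful functor reflects isomorphisms (a preimage under fullness of an inverse is, by faithfulness, an inverse) and restricts to ring isomorphisms on endomorphism rings, hence carries non-trivial idempotents to non-trivial idempotents in both directions; so it preserves indecomposability and isomorphism classes. Applying the corollary to $B=\K\quiver'/\langle\rst{R}{\quiver'}\rangle$ shows $\K\quiver/\langle R\rangle$ is wild, i.e.\ $(\quiver,R)$ is wild.

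If one prefers to stay inside the explicit $Z\otimes-$ formalism, the same argument can be run concretely: pick a witness $Z\in(\quiver',\rst{R}{\quiver'})\rep_{\K\left\langle u,v\right\rangle}$ for the wildness of $(\quiver',\rst{R}{\quiver'})$ and form $\widetilde{Z}$ by extension-by-zero over the base ring $\K\left\langle u,v\right\rangle$. One checks $\widetilde{Z}\in(\quiver,R)\rep_{\K\left\langle u,v\right\rangle}$: each $\widetilde{Z}_x$ is $Z_x$ or $0$, hence finitely generated free over $\K\left\langle u,v\right\rangle$; and $\widetilde{Z}$ kills every $r\in R$, because any path of $r$ leaving $\quiver'$ passes through a zero space and evaluates to $0$ on $\widetilde{Z}$, so that $\widetilde{Z}_r=Z_{\rst{r}{\quiver'}}=0$ since $Z$ satisfies $\rst{R}{\quiver'}$. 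As $\widetilde{(-)}$ commutes with $-\otimes_{\K\left\langle u,v\right\rangle}V$ vertexwise and arrowwise, there is a natural isomorphism $\widetilde{Z}\otimes_{\K\left\langle u,v\right\rangle}-\;\cong\;\widetilde{(-)}\circ(Z\otimes_{\K\left\langle u,v\right\rangle}-)$, a composite of representation embeddings, hence a representation embedding; so $(\quiver,R)$ is wild.

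There is no real obstacle: the statement is essentially immediate once the full-faithfulness of extension-by-zero is available. The only points that deserve a moment of care are the two verifications in the last paragraph — that $\widetilde{Z}$ is still a valid object of $(\quiver,R)\rep_{\K\left\langle u,v\right\rangle}$, the crux being that relations of $\quiver$ with an endpoint outside $\quiver'$, or containing a path exiting $\quiver'$, are automatically annihilated — and, in the first route, the standard fact that full and faithful functors preserve isomorphism classes, which is precisely the content for which \cite[Proposition~22.4]{bautista} is invoked.
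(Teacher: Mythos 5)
Your proof is correct and matches the paper's own argument: the paper proves Theorem~\ref{subquiver} immediately after remarking that extension-by-zero gives a full and faithful functor $(\quiver',\rst{R}{\quiver'})\rep_B\to(\quiver,R)\rep_B$, which is exactly the mechanism you invoke (in both the abstract form via \cite[Proposition~22.4]{bautista} and the explicit $Z\otimes-$ form). Your added verification that $\widetilde{Z}$ kills the relations $R$ is a natural elaboration of the same idea, not a different route.
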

Given $ V\in
\quiver\rep_B$, we define $\supp(V)$  to be the subquiver of $\quiver$ containing all the
vertices $x\in \quiver_0$ and all the arrows $\alpha \in \quiver_1$ such that
$ V_x\not=0$ and $V_\alpha \not=0$.
Then the essential image of the extension-by-zero functor from
$(\quiver',\rst{R}{\quiver'})\rep_B$  to $(\quiver,R)\rep_B$
coincides with the class of those representations $V$ such that $\supp(V) \subset
\quiver'$.
We will use this fact in the proof of Theorem~\ref{covering_criterion}.

Next we discuss the behaviour of representation type under coverings of quivers.
Let $\cQ$ be a quiver equipped with an action of a finite group $G$.
Denote by $\phi$ the canonical projection
$ \cQ \to
\cQ/G $.
In that situation one says that $\cQ$ is  a \emph{regular covering} of
$\cQ/G$.
Given a representation $V$ of $\cQ$ we define the representation $\phi_* V$  of $\cQ/G$  by
\begin{equation*}
\phi_* (V)_{xG} := \bigoplus_{g \in G} V_{xg},\quad\quad
\phi_* (V)_{\alpha G} = \sum_{g \in G} \varepsilon_{yg} \circ V_{\alpha g} \circ \pi_{x g},
\end{equation*}
for every $xG \in \cQ/G$ and
every arrow $x \xrightarrow{\alpha} y$ in $\cQ$,
 where $\pi_{z}$ and
$\varepsilon_{z}$ are, respectively,  the canonical projections and inclusions associated with
the direct sum decomposition.

The group $G$ induces an action on the category of finite dimensional representations of $\cQ$ as follows. Given such a representation
$V$ and $g\in G$ we define
\begin{equation*}
 (g_* V)_x = V_{xg^{-1}},\quad
(g_* V)_{\alpha} = V_{\alpha g^{-1}} \colon (g_* V)_x= V_{xg^{-1}} \to
(g_* V)_y = V_{yg^{-1}}.
\end{equation*}
For the convenience of the reader we restate \cite[Lemma~3.5]{gabrielLNM}
\begin{theorem}[{\cite{gabrielLNM}}]\label{gabriel}
Let $\cQ$ be a quiver and $G$ a group acting freely on $\cQ$.
Suppose that
$V$ is a finite dimensional indecomposable representation of $\cQ$ over
$\K$ such that
$g_* V \not\cong V$, for every $g\in G$,  $g\neq 1_G$.
Then $\phi_* V$ is indecomposable. Moreover, if $U\not\cong V$ is a representation of
$\cQ$ such that $\phi_* U \cong \phi_* V$, then there is $g\in G$, $g\not=1_G$, such
that $g_* V \cong U$.
\end{theorem}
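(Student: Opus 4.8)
The plan is to reduce everything to the standard description of morphisms between push-downs. The first and essentially only nontrivial step is to establish the \emph{Hom-formula}: for finite dimensional representations $V,W$ of $\cQ$ there is an isomorphism
\[
\Hom_{\cQ/G}(\phi_*V,\phi_*W)\;\cong\;\bigoplus_{g\in G}\Hom_{\cQ}(V,g_*W),
\]
natural in $V$ and $W$, compatible with composition, and carrying $\id_{\phi_*V}$ to the tuple with $\id_V$ in the component $g=1_G$ and $0$ elsewhere. Because $G$ acts freely on $\cQ$ and $V,W$ have finite support, a vertex in $\supp V\cap g\cdot\supp W$ determines $g$, so only finitely many summands are nonzero; hence $E:=\End_{\cQ/G}(\phi_*V)\cong\bigoplus_{g\in G}\Hom_{\cQ}(V,g_*V)$ is a finite dimensional $\K$-algebra, in which composition becomes the twisted convolution product on the right-hand side. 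Setting this up — the formula together with the explicit shape of the convolution — rests on the exactness and faithfulness of the pull-up functor $\phi^{\bullet}$ and on $\phi^{\bullet}\phi_*W\cong\bigoplus_{g\in G}g_*W$ for a free action; I would simply quote it from \cite{gabrielLNM}. This is the part carrying all the weight; everything that follows is formal.

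\emph{Indecomposability.} Each $g_*$ is an auto-equivalence of $\cQ\rep$, so the $g_*V$ are indecomposable, and since $g_*V\cong g'_*V$ would force $V\cong((g')^{-1}g)_*V$ they are pairwise non-isomorphic; moreover the $1_G$-component $\End_{\cQ}(V)$ of $E$ is local with $\End_{\cQ}(V)/\rad\End_{\cQ}(V)\cong\K$. Put
\[
\mathfrak{m}\;:=\;\rad\End_{\cQ}(V)\;\oplus\;\bigoplus_{g\neq 1_G}\Hom_{\cQ}(V,g_*V)\ \subseteq\ E,
\]
a subspace of codimension one. I would check that $\mathfrak{m}$ is a two-sided ideal: in the convolution, the only products that feed from the part with $g\neq 1_G$ back into the $1_G$-component are composites $V\xrightarrow{\,p\,}g_*V\xrightarrow{\,g_*q\,}V$ through the indecomposable $g_*V$, and as $g_*V\not\cong V$ such a composite is a non-unit of the local ring $\End_{\cQ}(V)$, hence lies in $\rad\End_{\cQ}(V)$; all remaining products land again in $\mathfrak{m}$ by inspection. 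The same observation, together with nilpotency of $\rad\End_{\cQ}(V)$ and the pairwise non-isomorphism of the $g_*V$, shows $\mathfrak{m}$ is nilpotent, so $\mathfrak{m}=\rad E$, $E/\rad E\cong\K$, and $E$ is local; thus $\phi_*V$ is indecomposable.

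\emph{The second assertion.} If $\phi_*U\cong\phi_*V$ then $U\neq 0$ (since $\phi_*V\neq 0$), and then $U$ is indecomposable, because $\phi_*$ is additive and a nontrivial decomposition of $U$ would produce one of $\phi_*V$. Fix an isomorphism $\theta\colon\phi_*V\to\phi_*U$ and write $\theta\leftrightarrow(f_g)_{g\in G}$ and $\theta^{-1}\leftrightarrow(h_g)_{g\in G}$ under the Hom-formula, with $f_g\in\Hom_{\cQ}(V,g_*U)$ and $h_g\in\Hom_{\cQ}(U,g_*V)$. The $1_G$-component of $\theta^{-1}\theta=\id_{\phi_*V}$, read through the convolution product, is an identity of the form
\[
\sum_{g\in G}(g_*h_{g^{-1}})\circ f_g\;=\;\id_V\qquad\text{in }\End_{\cQ}(V).
\]
As $\End_{\cQ}(V)$ is local, some summand $(g_*h_{g^{-1}})\circ f_g$ is a unit; then $f_g\colon V\to g_*U$ is a split monomorphism, and since $g_*U$ is indecomposable it is an isomorphism. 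Hence $g_*U\cong V$, i.e.\ $g^{-1}_*V\cong U$, and $g^{-1}\neq 1_G$, since otherwise $U\cong V$, against our assumption — which is the claim with $g$ replaced by $g^{-1}$.

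The only real obstacle is the first paragraph: one must set up the covering machinery correctly, in particular pin down the convolution multiplication on $\bigoplus_{g}\Hom_{\cQ}(V,g_*V)$. After that, the two statements are routine facts about local finite dimensional $\K$-algebras.
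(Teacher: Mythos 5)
The paper does not actually prove this theorem; it explicitly says it is merely restating Gabriel's Lemma~3.5 from \cite{gabrielLNM} for the reader's convenience, and then uses it as a black box. So there is no in-paper proof to compare against. Your argument is a correct reconstruction of the standard proof behind Gabriel's lemma: the left adjunction between push-down $\phi_*$ and pull-up $\phi^{\bullet}$, combined with $\phi^{\bullet}\phi_*W\cong\bigoplus_{g}g_*W$ for a free action, gives the Hom-formula you cite (finiteness of the sum for finite dimensional $V,W$ is exactly the finite-support observation you make); locality of $E=\End(\phi_*V)$ and the splitting argument in the second assertion are then formal local-ring manipulations on the convolution product, and both steps check out.

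The one place that is stated too briskly is the nilpotency of $\mathfrak{m}$. Being a two-sided ideal of codimension one does not by itself force $\mathfrak{m}=\rad E$; you do need nilpotency (or equivalently $\mathfrak{m}\subseteq\rad E$) to conclude locality. The cleanest justification here is Harada--Sai: every map contributing to the $h$-component of a product of $N$ elements of $\mathfrak{m}$ is a composite of $N$ morphisms between the indecomposable representations $g_*V$, all of the same finite length, and each intermediate morphism is a non-isomorphism — either because its source and target are non-isomorphic $g_*V$'s (using the hypothesis $g_*V\not\cong V$ for $g\neq 1_G$), or because it comes from a translate of the $1_G$-component, which lies in the radical of a local endomorphism ring. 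Hence such composites vanish for $N$ large. Once $\mathfrak{m}$ is nilpotent, $\mathfrak{m}\subseteq\rad E$, and since $E/\mathfrak{m}\cong\K$ is semisimple also $\rad E\subseteq\mathfrak{m}$, so $E$ is local. With this spelled out, the proof is complete.
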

Let  $G$ be a finite group acting freely  on $\quiver$.
Then for every vertices $x$, $y\in \quiver$, path $p\colon xG \to yG$ in
$\quiver/G$, and  $g\in G$ there
is a unique element $g'\in G$ such that there is a (unique) path $p_g \colon xg \to yg'$ in $\quiver$ that lifts
$p$. Notice that the correspondence $g\mapsto g'$ defines a bijective map
$\sigma_p \colon G \to G$.
For each relation $r=\sum\limits_{p \in \cal{J}} a_p p$ in $\quiver/G$ we define the set of relations
$r_G$ in $\quiver$ by
\begin{equation*}
r_G := \Big\{\, \sum_{ p \in \cal{J}} a_p p_g \,\Big|\, g\in G \Big\}.
\end{equation*}
It is not difficult to see that if $V$ is a representation of $(\quiver,r_G)$
then $\phi_* V$ is a representation of $(\quiver/G, r)$. More generally, if
$R$ is a set of relations in $\quiver/G$ and we define $R_G = \bigcup\limits_{r\in R} r_G$, then
for every representation $V$ of $(\quiver, R_G)$, the representation $\phi_* V$
of $\quiver/G$ satisfies the relations in $R$.

The next theorem will be used to prove wildness of some quivers with relations.
We say that a wild quiver with relations $(\quiver,S)$ is \emph{minimal wild} if $(\quiver',
\rst{S}{\quiver'})$ is not wild for every proper subquiver $\quiver'$ of
$\quiver$. 

\begin{theorem}\label{covering_criterion}
Let $\quiver$ be a finite quiver, $G$ a group acting freely on $\quiver$, and $S$ a set
of relations in $\quiver/G$.
Suppose there is a  subquiver $\quiver'$ of $\quiver$ such that
\begin{enumerate}[$\bullet$ ]
\item $(\quiver', \rst{S_G}{\quiver'})$ is minimal wild;
\item there is no non-trivial $g \in G$ that fixes $\quiver'$;
\end{enumerate}
Then the quiver $(\quiver/G, S)$ has wild representation type.
\end{theorem}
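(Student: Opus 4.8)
The plan is to derive the wildness of $(\quiver/G,S)$ from the wildness of the subquiver $(\quiver',\rst{S_G}{\quiver'})$ by pushing down a suitable family of indecomposables. First I would invoke Theorem~\ref{subquiver}: since $(\quiver',\rst{S_G}{\quiver'})$ is wild, the full quiver $(\quiver,S_G)$ is wild, and because the extension-by-zero functor from $(\quiver',\rst{S_G}{\quiver'})\rep$ to $(\quiver,S_G)\rep$ is full and faithful with essential image exactly the representations supported in $\quiver'$, I may regard the wildness of $(\quiver,S_G)$ as realised by a family $\{V_\lambda\}$ of pairwise non-isomorphic indecomposables all with $\supp(V_\lambda)\subseteq\quiver'$. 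The key point is that this family can be taken rich enough to still witness wildness after applying $\phi_*$.

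Next I would control the $G$-action on this family. The hypothesis that no non-trivial $g\in G$ fixes $\quiver'$ means that for $g\neq 1_G$ the subquivers $\quiver'$ and $\quiver' g$ are not equal; more precisely there is some vertex or arrow of $\quiver'$ whose $g$-translate lies outside $\quiver'$, so $\supp(g_*V_\lambda)=(\supp V_\lambda)g$ is not contained in $\quiver'$ whenever $\supp V_\lambda$ is "large enough" inside $\quiver'$ — in particular $g_*V_\lambda\not\cong V_\mu$ for any $\mu$, and a fortiori $g_*V_\lambda\not\cong V_\lambda$. This is exactly the freeness-type hypothesis needed to feed Theorem~\ref{gabriel}: each $V_\lambda$ satisfies $g_*V_\lambda\not\cong V_\lambda$ for all $g\neq 1_G$, hence $\phi_*V_\lambda$ is an indecomposable representation of $\cQ/G$, and since $R_G$-representations push to $R$-representations under $\phi_*$ (with $R=S$ here, so $S_G$ pushes to $S$), each $\phi_*V_\lambda$ lies in $(\quiver/G,S)\rep$. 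Moreover the second clause of Theorem~\ref{gabriel} gives $\phi_*V_\lambda\cong\phi_*V_\mu$ only if $V_\mu\cong g_*V_\lambda$ for some $g$, which by the support argument forces $\lambda=\mu$; so $V_\lambda\mapsto\phi_*V_\lambda$ sends non-isomorphic indecomposables to non-isomorphic indecomposables.

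To promote this from "a bijection on a family of indecomposables" to genuine wildness, I would work functorially: take the representation embedding $Z\otimes_{\K\langle u,v\rangle}-\colon \K\langle u,v\rangle\md\to(\quiver',\rst{S_G}{\quiver'})\rep$ witnessing wildness of $\quiver'$, compose with extension-by-zero into $(\quiver,S_G)\rep$, and then compose with $\phi_*\colon(\quiver,S_G)\rep\to(\quiver/G,S)\rep$. The composite is again of the form $Z'\otimes_{\K\langle u,v\rangle}-$ for a suitable bimodule $Z'$, free of finite rank on the right since $\phi_*$ only takes finite direct sums over the finite group $G$. By the remark after~\cite[Proposition~22.4]{bautista} it suffices to check this composite preserves isomorphism classes; indecomposability and the injectivity on iso-classes follow from the two parts of Theorem~\ref{gabriel} combined with the support/translation argument above, applied not just to the $V_\lambda$ but to all $Z\otimes N$ for $N$ indecomposable — here one uses that such $Z\otimes N$ is itself indecomposable with support in $\quiver'$ and that $Z\otimes N\cong Z\otimes N'$ iff $N\cong N'$. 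Hence $(\quiver/G,S)$ is wild.

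The main obstacle is the step verifying $g_*(Z\otimes N)\not\cong Z\otimes N$ for every $g\neq 1_G$ and every indecomposable $N$ — i.e. ensuring the no-fixing hypothesis on $\quiver'$ genuinely forbids a translate of a $\quiver'$-supported indecomposable from being isomorphic to another $\quiver'$-supported one. This needs the minimal wildness of $(\quiver',\rst{S_G}{\quiver'})$: if $\supp(Z\otimes N)$ were a proper subquiver $\quiver''\subsetneq\quiver'$, then $(\quiver'',\rst{S_G}{\quiver''})$ would already be wild, contradicting minimality; so $\supp(Z\otimes N)=\quiver'$, and then $\supp(g_*(Z\otimes N))=\quiver' g\neq\quiver'$, which is not contained in $\quiver'$, so it cannot equal any $\quiver'$-supported representation. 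This is precisely where both bulleted hypotheses are used, and getting the support statement exactly right (including that the functor $Z\otimes-$ does not accidentally kill part of $\quiver'$, which again is minimal wildness) is the delicate point; everything else is a routine assembly of Theorems~\ref{subquiver} and~\ref{gabriel}.
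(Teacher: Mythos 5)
Your proposal is essentially the paper's own proof: compose the representation embedding $Z\otimes_{\K\langle u,v\rangle}-$ with extension-by-zero into $\cQ$ and then with $\phi_*$, use minimal wildness to force the support to be all of $\cQ'$, and use the no-fix hypothesis to verify the orbit condition of Theorem~\ref{gabriel}. One small slip worth flagging: the minimal-wildness argument should be applied to $\supp(Z)$ (if $\supp(Z)=\cQ''\subsetneq\cQ'$ then $Z\otimes-$ would factor through $(\cQ'',\rst{S_G}{\cQ''})\rep$, making that proper subquiver wild), not directly to $\supp(Z\otimes N)$ for a single $N$ as you phrase it; one then concludes $\supp(Z\otimes N)=\supp(Z)=\cQ'$, which is the step the paper asserts.
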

\begin{proof}
Let $Z$ be a representation of $(\quiver', \rst{S_G}{\quiver'})$ over $\K\left\langle u,v
\right\rangle$ such that the functor $Z\otimes_{\K\left\langle u,v
\right\rangle} - $ is a representation embedding.
Recall that we denote by $\,\widetilde{\cdot }\,$ the extension-by-zero functor.
It is clear that the functors $\phi_* \circ
(\widetilde{Z}\otimes_{\K\left\langle u,v \right\rangle} -)$ and $(\phi_*
\widetilde{Z})\otimes_{\K\left\langle u,v \right\rangle} -$ are naturally isomorphic. Thus
to show that $(\quiver/G,S)$ is wild it is enough to check that $\phi_* \circ
(\widetilde{Z}\otimes_{\K\left\langle u,v \right\rangle}-)$ is a representation embedding.

Since $(\quiver', \rst{S_G}{\quiver'})$ is minimal wild, the support of
$Z$, and thus also the support of $\widetilde{Z}$,  coincides with $\quiver'$. 

Let $X$ be an indecomposable $\K\left\langle u,v \right\rangle$-module. Denote
$\widetilde{Z}\otimes_{\K\left\langle u,v \right\rangle}X $ by $\overline{X}$. Then
$\supp \left( \overline{ X }\right) = \supp
\left( \widetilde{Z} \right) = \quiver'$ and  so $\supp(g_* \overline{X}) =
\supp(\overline{X}) g^{-1}
\not= \supp(\overline{X}) $ unless $g=e_G$. In particular, $g_*
\overline{X} \not\cong \overline{X}$.
Thus, by Theorem~\ref{gabriel}, the representation $\phi_* \overline{X}$ of
$(\quiver/G, S)$ is indecomposable.

Now let $Y$ be another indecomposable $\K\left\langle u,v \right\rangle$-module not isomorphic to $X$.
We write $\overline{Y}$ for
$\widetilde{Z}\otimes_{\K\left\langle u,v \right\rangle}Y$. Then $\overline{X} \not\cong
\overline{Y}$ since $\widetilde{Z}\otimes_{\K\left\langle u,v \right\rangle} - $ is a
representation embedding. Therefore, by Theorem~\ref{gabriel}, the
representations $\phi_* \overline{X}$ and $\phi_* \overline{Y}$ of
$(\quiver/G, S)$ are isomorphic if and only if there is $g\not=e_G$ such that
$g_* \overline{X} \cong  \overline{Y}$. But then
\begin{equation*}
\supp(\widetilde{Z}) = \supp(\overline{Y}) = \supp(g_* \overline{X}) =
\supp(\overline{X}) g^{-1} = \supp(\widetilde{Z})g^{-1},
\end{equation*}
which contradicts  the already proved assertion that $\supp(\widetilde{Z})g \not= \supp(\widetilde{Z}) $
for $g\not=e_G$. This finishes the proof of the theorem.
\end{proof}

Let $(\quiver,R)$ be a quiver with relations and
$A$ the corresponding basic algebra. Suppose $\quiver'$ is a subquiver of
$\quiver$. Denote by $e$ the idempotent of $A$ given by $\sum_{x\in \quiver'}
e_x$. Then $eAe$ is a basic algebra. It is not true in general  that the quiver of $e A e$ is $\quiver'$. We
say that $\quiver'$ is \emph{convex} if every path in $\quiver$ connecting two
vertices in $\quiver'$ completely lies in $\quiver'$. Notice, that a convex
subquiver is always full.
\begin{proposition}
\label{convex}
Let $(\quiver,R)$ be a quiver with relations defined over $\K$, and
 $\quiver'$  a convex subquiver of $\quiver$. Then $\quiver'$ is the
quiver of the basic algebra $e A e$, where $A = \K \quiver/ \left\langle R
\right\rangle$ and $e = \sum_{x\in \quiver'} e_x$.
\end{proposition}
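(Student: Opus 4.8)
The plan is to compute the Gabriel quiver of $eAe$ and check that it coincides with $\quiver'$. Write $A=\K\quiver/\langle R\rangle$. Since $A$ is basic with quiver $\quiver$, the ideal $\langle R\rangle$ lies in the square of the arrow ideal of $\K\quiver$; consequently $\rad A$ is the image of the arrow ideal, so $\rad A$ is spanned by the images of the paths of length $\geq 1$ in $\quiver$, and $\rad^2 A$ by the images of the paths of length $\geq 2$. The elements $e_x$ with $x\in\quiver'_0$ are pairwise orthogonal primitive idempotents summing to $e$, and each remains primitive in $eAe$ because $e_x(eAe)e_x=e_xAe_x$; hence $eAe$ is basic and the vertex set of its quiver is canonically $\quiver'_0$, which is already the vertex set of $\quiver'$.

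It remains to count arrows. I would use the two standard facts that $\rad(eAe)=e(\rad A)e$ --- because $e(\rad A)e$ is a nilpotent ideal of $eAe$ with semisimple quotient $e(A/\rad A)e$ --- and that the number of arrows from $x$ to $y$ in the quiver of $eAe$ equals $\dim_\K e_y\!\bigl(\rad(eAe)/\rad^2(eAe)\bigr)e_x$ (in the convention fixed for $\quiver$). For $x,y\in\quiver'_0$ this gives $e_y\rad(eAe)e_x=e_y(\rad A)e_x$ and $e_y\rad^2(eAe)e_x=e_y(\rad A)\,e\,(\rad A)e_x$, so the whole statement reduces to the identity
$$e_y\,(\rad A)\,e\,(\rad A)\,e_x=e_y\,(\rad^2 A)\,e_x\qquad(x,y\in\quiver'_0).$$

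This identity is the only place where convexity is used, and it is the step I expect to carry the real content. The inclusion $\subseteq$ is immediate, since $(\rad A)\,e\,(\rad A)\subseteq(\rad A)^2$. For $\supseteq$, an element of $e_y(\rad^2 A)e_x$ is a linear combination of images of paths $p\colon x\to y$ in $\quiver$ of length $\geq 2$. Any such $p$ factors as $p=p_2 p_1$ through an intermediate vertex $z$; since $x,y\in\quiver'$ and $\quiver'$ is convex, the path $p$ --- and in particular $z$ --- lies in $\quiver'$, so $e_z=e_ze=ee_z$. Hence the image of $p$ equals $(\text{image of }p_2)\,e_z\,e\,e_z\,(\text{image of }p_1)\in e_y(\rad A)\,e\,(\rad A)e_x$, which proves $\supseteq$.

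Combining these, $e_y\!\bigl(\rad(eAe)/\rad^2(eAe)\bigr)e_x\cong e_y\!\bigl(\rad A/\rad^2 A\bigr)e_x$ for all $x,y\in\quiver'_0$, so the quiver of $eAe$ has exactly as many arrows from $x$ to $y$ as $\quiver$ does; and since a convex subquiver is full, this equals the number of arrows from $x$ to $y$ in $\quiver'$. Therefore the quiver of $eAe$ and $\quiver'$ have the same vertices and the same arrows, hence coincide. Everything outside the displayed identity is routine --- it is only there that the convexity hypothesis is genuinely needed.
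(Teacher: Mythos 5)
Your argument is essentially the same as the paper's: both reduce the claim to showing that, because $\quiver'$ is convex, paths of length at least two between vertices of $\quiver'$ pass only through vertices of $\quiver'$, so that the idempotent $e$ absorbs intermediate vertices and $\rad^2(eAe)=e(\rad^2 A)e$. The only difference is that you make the dimension count explicit via the identity $e_y(\rad A)e(\rad A)e_x=e_y(\rad^2 A)e_x$ and the observation that a convex subquiver is full, whereas the paper argues a bit more loosely that $\rad(eAe)/\rad^2(eAe)$ is \emph{generated} by the arrows of $\quiver'$ without explicitly addressing linear independence; your version is slightly tighter on that point.
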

\begin{proof}
The algebra $eAe$ is $\K$-spanned by the paths in $\quiver$
that start and end in $\quiver'$. As $\quiver'$ is convex all these paths lie
 inside $\quiver'$. Thus $eAe$ is generated as a ring by arrows in
$\quiver'$. Every arrow of $\quiver'$ lies in the radical of $A$ and so  is
nilpotent. Hence it also lies  in the radical of $eAe$.
Similarly every path of length no less than two with start and end in
$\quiver'$ lies in the $\rad^2 (eAe)$. This shows that
$\rad(eAe)/\rad^2(eAe)$ is generated by the arrows in $\quiver'$, i.e. that
$\quiver'$ is the quiver of the basic algebra $eAe$.
\end{proof}

\section{Borel-Schur algebras}
\label{schur_preliminaries}
In this section we introduce Schur and Borel-Schur algebras and establish some basic facts about these algebras.

Let  $n$ and $r$ be arbitrary  fixed positive integers. Consider  the general linear group $\Gl_n (\K)$ and denote by $\mbox{B}^+$ the Borel subgroup of $\Gl_n (\K)$ consisting of all  upper triangular invertible matrices.
The general linear group $\Gl_n (\K)$ acts on $\K^n$ by multiplication. So $\Gl_n (\K)$ acts on  the $r$-fold tensor product $(\K^n)^{\otimes r}$ by the rule $g (v_1 \otimes \dots \otimes v_r)= gv_1 \otimes \dots \otimes gv_r $, all $g \in \Gl_n (\K)$, $v_1, \dots, v_r \in \K^n$. Let
\begin{equation*}
\begin{aligned}
 \rho_{n,r} \colon \K \Gl_n(\K) \to  \mbox{End}_\K ((\K^n)^{\otimes r})
\end{aligned}
\end{equation*}
be the representation afforded by $(\K^n)^{\otimes r}$  as a $\K \Gl_n (\K)$-module. Then $\rho_{n,r}( \K \Gl_n(\K))$  is a subalgebra of $\mbox{End}_\K ((\K^n)^{\otimes r})$.
\begin{definition}  The algebra $\rho_{n,r}( \K \Gl_n(\K))$ is called the  Schur  algebra for $n$, $r$ and $\K$ and is denoted by $S_\K(n,r)$, or simply $S(n,r)$.
The Borel-Schur algebra $S^+(n,r)=S_\K^+(n,r)$ is the subalgebra $\rho_{n,r}( \K \mbox{B}^+)$ of the Schur algebra.
\end{definition}

To describe a standard basis for $S(n,r)$ we  need some combinatorics. We start by summarizing the terms we use.
\begin{itemize}
\item  $\Sigma_r$ is the symmetric group on $ \left\{\, 1, \dots, r \,\right\}$.
\item  $I(n,r)= \left\{\, i=(i_1,\dots,i_r) \,\middle|\,\, i_s \in \mathbb{Z}, \,\,1\leq i_s\leq n, \,\, \mbox{for all } \,s\,
\right\}$. The elements of $I(n,r)$ are called multi-indices.
\item $\Lambda(n,r)= \left\{\, \lambda=(\lambda_1,\dots,\lambda_n) \,\middle|\,\, \lambda_t \in \mathbb{Z}, \,\,0\leq \lambda_t \,\,(t=1, \dots, n), \,\,\sum_{t=1}^n \lambda_t=r
\right\}$ is the set of all compositions of $r$ into $n$ parts.
\item $i \in I(n,r)$ has \emph{weight} $\lambda \in \Lambda(n,r)$  if $ \lambda_t = \#\left\{\, 1\le s \le r \,\middle|\, i_s=t \right\}$,  $t=1, \dots, n$.
\item  $\ledom$ denotes the \emph{dominance order} on $\Lambda(n,r)$, that is
$\alpha\ledom\beta$ if
$\sum_{t=1}^s\alpha_t \le \sum_{t=1}^s \beta_t$, for all $1 \leq s \leq n$.
\item For $i,j \in I(n,r)$, $ i \leq j$ means $i_s \leq j_s$, $s=1,\dots, r$, and $i< j $ means $ i \leq j$ and $i\neq j$.
Obviously, if $ i $ has  weight  $\mu $   and $j$ has  weight  $\lambda$, then
$i \leq j $ implies $\lambda\ledom \mu$.
\end{itemize}

 The symmetric group $\Sigma_r$  acts on the right of $I(n,r)$ and of $I(n,r) \times I(n,r)$, respectively,   by
\begin{equation*}
i \sigma = (i_{\sigma(1)},\dots,i_{\sigma(r)})\,\,\, \mbox{and} \,\,\, (i,j)\sigma =(i\sigma, j\sigma),  \,\,\,\mbox{all}\,\, i,j \in I(n,r),  \sigma \in \Sigma_r.
\end{equation*}
Note that $i,j \in I(n,r) $ are in the same  $\Sigma_r$-orbit if and only if they have the same weight. Therefore
the $\Sigma_r$-orbits on $I(n,r)$ are identified  with the elements of~$\Lambda(n,r)$. We denote by $\Sigma_i$  the stabilizer of $i $ in $\Sigma_r$, that is $\Sigma_i= \left\{\, \sigma \in \Sigma_r \,\middle|\,\,i \sigma =i \,\right\}$. We write
$\Sigma_{i,j} = \Sigma_{i} \cap \Sigma_{j}$, all $i,j \in I(n,r)$.

To each pair $(i,j) \in I(n,r) \times I(n,r)$ one can associate  an element
$\xi_{i,j}$ of $S(n,r)$ (see~\cite{green}).  These elements satisfy $\xi_{i,j} =
\xi_{k,\ell}$ if and only if $( i,j )$  and $( k,\ell )$ are in the same
$\Sigma_r$-orbit of $I(n,r) \times I(n,r)$. Fix a
transversal  
$\Omega(n,r)$ for the action of $\Sigma_r$ on $I(n,r) \times I(n,r)$. Then   the set  $\left\{\,
\xi_{i,j}
\,\middle|\, (i,j)\in \Omega(n,r) \right\}$ is a basis of  $S(n,r)$ over $\K$.
It is also well known  (see~\cite{green2}) that $\,S^+(n,r) = \K\left\{\,
\xi_{i,j} \,\middle|\, i\le j,\ (i,j) \in \Omega(n,r) \right\}$.

A formula for the product of two basis elements is the following (see~\cite{green2}):
$\xi_{i,j}\xi_{k,h} = 0$, unless $j$ and $k$ are in the same $\Sigma_r$-orbit, and
\begin{equation}
	\label{0.0}
	\xi_{i,j}\xi_{j,h} = \sum_{\sigma} \left[ \Sigma_{i\sigma,
	h}:\Sigma_{i\sigma, j, h} \right] \xi_{i\sigma, h}
\end{equation}
where the sum is over a transversal $\left\{ \sigma \right\}$ of the set of all double cosets
$\Sigma_{i,j}\sigma \Sigma_{j,h}$ in $\Sigma_j$, and  $\Sigma_{i \sigma,j, h} = \Sigma_{i\sigma,h} \cap \Sigma_{j}$.

If $i$ has weight $\lambda\in \Lambda(n,r)$, we write $\xi_{i,i}= \xi_{\lambda}$. Then
$1_{S(n,r)} = \sum_{\lambda\in\Lambda(n,r)} \xi_{\lambda}$ is an orthogonal idempotent decomposition of $1_{S(n,r)} $.

It was shown in~\cite{aps} that the algebra $S^+(n,r)$ is a basic algebra.
The idempotents
$\xi_{\lambda}$ , $\lambda \in \Lambda(n,r)$, are primitive in
$S^+(n,r)$. The quiver $\quiver$ of $S^+(n,r)$  was \emph{de facto} determined
in~\cite[Theorem~5.4]{aps}.
 The vertices of $\cQ$ correspond to the primitive idempotents $\xi_\lambda$, and so $\quiver_0$ can be identified with $\Lambda(n,r)$.  If  $\ch \K=0$, the
quiver $\quiver$ contains  an arrow from the vertex $\lambda$ to  the vertex $\mu$ if and only if, for some positive integer $s$, we have
 $\mu -\lambda = \gamma_s$, where $\gamma_s =
(0,\dots, 1,-1,\dots, 0)$ with $1$ at the $s$th position. If  $\ch \K=p$, such an arrow exists  if and only if
there are integers $s \geq 1$ and $d \geq 0$ such that $\mu -\lambda = p^d \gamma_s$.
Notice that for such $\lambda$ and $\mu$ the vector space $\xi_\mu S^+(n,r)
\xi_{\lambda}$ is one-dimensional and is spanned by the element
$\xi_{\ell (\lambda(s, p^d)),\ell(\lambda)}$, where $ \ell(\lambda)$ and $\ell (\lambda(s, p^d)) \in I(n,r)$ are the standard elements
\begin{equation*}
\begin{aligned}
 \ell(\lambda) &= ( \underbrace{1,\dots,1}_{\lambda_1},
 \dots, \underbrace{n,\dots, n}_{\lambda_n}
) ,\\[2ex]
\ell (\lambda(s, p^d)) & =  ( \underbrace{1,\dots,1}_{\lambda_1},
 \dots, \underbrace{s, \dots, s}_{\lambda_s+p^d}, \underbrace{s+1, \dots, s+1}_{\lambda_{s+1}-p^d}, \dots,  \underbrace{n,\dots, n}_{\lambda_n}
).
\end{aligned}
\end{equation*}

A similar result holds in characteristic $0$, with $p^d$ replaced by $1$.
It should also be mentioned that the sets
\begin{equation}
\begin{aligned}
& \left\{\, \xi_{\ell (\lambda(s, 1)),\ell(\lambda)}\,\, \,\middle|\,\,1\leq s\leq
n-1\, \phantom{ 1 \leq p^d \leq \lambda_{s+1}} \,\,
\,\right\}, \,\, \mbox{if  $\ch\K=0$} ,\\[0.5ex]
\\
& \left\{\, \xi_{\ell (\lambda(s, p^d)),\ell(\lambda)} \,\middle|\,\,1\leq s\leq n-1;\,1 \leq p^d \leq \lambda_{s+1}\,
\right\}, \,\,\mbox{if $\ch\K=p$},
\end{aligned}
\end{equation}
are minimal sets of $S^+(n,r)$-generators of rad~$S^+(n,r)\xi_{\lambda}$ (see~\cite[Theorem~4.5]{aps}).
In~\cite{finite} we determined the relations for the quiver $\quiver$ of $S^+(2,r)$.
If $\ch \K =0$, $\quiver$ is of type $A_{r+1}$ and $\K\quiver \simeq S^+(2,r)$.
Suppose now that $\ch \K =p$.
For every $\lambda$, $\mu\in \Lambda(2,r)$ such that $\mu -\lambda =
(p^d,-p^d)$, we denote by $\alpha_{d,\lambda}$ the arrow from $\lambda$ to
$\mu$ in  $\quiver$. We say that
$\alpha_{d,\lambda}$ is of type $\alpha_d$. Notice that every vertex in
$\quiver$ has at most one incoming and at most one outgoing arrow of type
$\alpha_d$. This implies that to specify a path in $\quiver$ it is enough to
indicate the starting vertex and the types of arrows in the path. For example
$(\alpha_0 \alpha_1)_{(a,b)}$ will denote the path $\alpha_{0,(a+p,b-p)}
\alpha_{1,(a,b)}$. It is also natural to abbreviate the repeated types with the
usual power notation. With these conventions, the relations for $S^+(2,r)$ as
quotient algebra of the path algebra of $\quiver$ can be written as
\begin{equation*}
\begin{aligned}
(\alpha_s \alpha_t)_{\lambda} &= (\alpha_t \alpha_s)_{\lambda},\ \lambda_2 \ge
p^s+p^t, \,\,s\neq t\\[2ex]
(\alpha_s^{p})_\lambda = &0,\ \lambda_2\ge p^{s+1}.
\end{aligned}
\end{equation*}
\begin{proposition}
\label{idempotent}
For every $m\le n$ and $s\le t$ there is an idempotent $e$ in $S^+(n,t)$
such that
\begin{equation*}
e S^+(n,t) e \cong S^+(m,s).
\end{equation*}
\end{proposition}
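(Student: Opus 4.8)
The plan is to exhibit the idempotent and a candidate isomorphism explicitly, and then verify it using the combinatorial description of the standard basis together with the product formula~\eqref{0.0}. Because $m\le n$ and $s\le t$, define $\iota\colon\Lambda(m,s)\to\Lambda(n,t)$ by $\iota(\lambda)=(\lambda_1+t-s,\lambda_2,\dots,\lambda_m,0,\dots,0)$ and put $e:=\sum_{\lambda\in\Lambda(m,s)}\xi_{\iota(\lambda)}$; equivalently, $e=\sum_{\mu}\xi_\mu$ over all $\mu\in\Lambda(n,t)$ with $\mu_1\ge t-s$ and $\mu_{m+1}=\dots=\mu_n=0$. As a sum of distinct primitive orthogonal idempotents of $S^+(n,t)$, $e$ is an idempotent. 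On multi-indices, $\iota$ corresponds to the map $I(m,s)\to I(n,t)$, $i=(i_1,\dots,i_s)\mapsto\widehat{i}:=(1,\dots,1,i_1,\dots,i_s)$ with $t-s$ leading $1$'s, and I claim that $\xi_{i,j}\mapsto\xi_{\widehat{i},\widehat{j}}$ defines an algebra isomorphism $S^+(m,s)\xrightarrow{\ \sim\ }eS^+(n,t)e$.

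First I would identify the image and show the map is a linear bijection of bases. Using $\xi_\nu\xi_{i',j'}=\delta_{\nu,\mathrm{wt}(i')}\xi_{i',j'}$ and $\xi_{i',j'}\xi_\nu=\delta_{\nu,\mathrm{wt}(j')}\xi_{i',j'}$, the algebra $eS^+(n,t)e$ has as $\K$-basis all $\xi_{i',j'}$ with $i'\le j'$, all entries of $i'$ and $j'$ at most $m$, and $\mathrm{wt}(i')_1,\mathrm{wt}(j')_1\ge t-s$; since $i'\le j'$ forces every position where $j'$ has value $1$ to be a position where $i'$ has value $1$, the condition on $\mathrm{wt}(i')_1$ is automatic once $\mathrm{wt}(j')_1\ge t-s$. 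Now the $\Sigma_r$-orbit of a pair is determined by the multiset of its columns $(i'_p,j'_p)$, and for $i'\le j'$ a column with second entry $1$ must equal $(1,1)$; hence the orbits relevant for $eS^+(n,t)e$ are exactly the multisets of $t$ columns $(a,b)$ with $1\le a\le b\le m$ containing at least $t-s$ copies of $(1,1)$. Deleting $t-s$ copies of $(1,1)$ is a bijection onto the multisets of $s$ such columns, that is, onto the orbits indexing the standard basis of $S^+(m,s)$, and this bijection is precisely $[(i,j)]\mapsto[(\widehat{i},\widehat{j})]$. So $\xi_{i,j}\mapsto\xi_{\widehat{i},\widehat{j}}$ is a $\K$-linear bijection between the two standard bases.

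Next I would check it is a homomorphism, using~\eqref{0.0}. It sends $1=\sum_\lambda\xi_\lambda$ to $e$, and since $\widehat{\cdot}$ preserves orbits it sends products that vanish to products that vanish, so it only remains to compare $\xi_{\widehat{i},\widehat{j}}\xi_{\widehat{j},\widehat{h}}$ with the image of $\xi_{i,j}\xi_{j,h}$. The point is that $\widehat{i}$, $\widehat{j}$, $\widehat{h}$ all carry the value $1$ in the first $t-s$ positions, so the symmetric group on those positions lies in every stabiliser that occurs. Writing $\widetilde\iota\colon\Sigma_s\hookrightarrow\Sigma_t$ for the embedding fixing the first $t-s$ points, one checks: (i) $\widehat{i\sigma}=\widehat{i}\,\widetilde\iota(\sigma)$; (ii) the ``new'' part of $\Sigma_{\widehat{j}}$ relative to $\Sigma_j$ is absorbed into the left factor $\Sigma_{\widehat{i},\widehat{j}}$, which contains the full symmetric group $\Sigma_U$ on the set $U$ of $1$-valued positions of $\widehat{j}$, so $\widetilde\iota$ carries a transversal of the double cosets $\Sigma_{i,j}\backslash\Sigma_j/\Sigma_{j,h}$ onto a transversal of $\Sigma_{\widehat{i},\widehat{j}}\backslash\Sigma_{\widehat{j}}/\Sigma_{\widehat{j},\widehat{h}}$; and (iii) for each such $\sigma$ one has $[\Sigma_{\widehat{i\sigma},\widehat{h}}:\Sigma_{\widehat{i\sigma},\widehat{j},\widehat{h}}]=[\Sigma_{i\sigma,h}:\Sigma_{i\sigma,j,h}]$, because in both numerator and denominator the block of $1$-valued positions (containing the $t-s$ prepended ones) splits off as a common direct factor. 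Feeding (i)--(iii) into~\eqref{0.0} identifies $\xi_{\widehat{i},\widehat{j}}\xi_{\widehat{j},\widehat{h}}$ with the image of $\xi_{i,j}\xi_{j,h}$, so the bijection of the previous paragraph is an algebra isomorphism $S^+(m,s)\cong eS^+(n,t)e$.

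The main obstacle is (ii)--(iii): one must be sure that prepending a string of $1$'s changes none of the structure constants of~\eqref{0.0}. This reduces to a careful, but routine, analysis of the Young subgroups involved — invoking $i\le j\le h$ repeatedly, each of $\Sigma_{\widehat{i},\widehat{j}}$, $\Sigma_{\widehat{j},\widehat{h}}$, $\Sigma_{\widehat{i\sigma},\widehat{h}}$ and their mutual intersections factors as (a symmetric group on a block of $1$-valued positions)$\,\times\,$(a group supported away from that block), and in every index $[\,\cdot:\cdot\,]$ the symmetric-group factor cancels on top and bottom. If one prefers to separate the two reductions, the case $m\le n$ can be handled more transparently by restricting operators to the sum of weight spaces $\bigoplus_{\lambda_{m+1}=\dots=\lambda_n=0}V_\lambda=(\K^m)^{\otimes t}$, which identifies $\overline{e}\,S^+(n,t)\,\overline{e}$ with $S^+(m,t)$ for $\overline{e}=\sum_{\lambda_{m+1}=\dots=\lambda_n=0}\xi_\lambda$ (here $\overline{e}\rho_{n,t}(g)\overline{e}$ is just $\rho_{m,t}$ applied to the top-left $m\times m$ block of $g$), and then carry out the $t\to s$ reduction inside $S^+(m,t)$ — but that step still requires the same combinatorial verification.
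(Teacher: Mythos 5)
Your proof takes essentially the same route as the paper's: the idempotent is the same (sum of $\xi_\mu$ over weights of the prescribed shape), the candidate isomorphism is the same ($\xi_{i,j}\mapsto\xi_{\widehat\imath,\widehat\jmath}$ after prepending $1$'s), and the verification is the same combinatorial analysis of Young subgroups and double cosets feeding into formula~\eqref{0.0}. The only packaging difference is that the paper first reduces to the two atomic steps $(n,t)\to(n-1,t)$ and $(n,t)\to(n,t-1)$ and then verifies only the latter in detail (the former being cited from earlier work), whereas you carry out both reductions in one stroke by prepending $t-s$ leading $1$'s and enlarging the alphabet simultaneously; your (i)--(iii) exactly parallel the paper's choice of transversal with $\sigma|_{j(1)}=\mathrm{id}$, the construction of $\bar\sigma$, and the cancellation of the common symmetric-group factor in the index $[\Sigma_{i\sigma,\ell}:\Sigma_{i\sigma,j,\ell}]$. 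Your remark that the $m\le n$ step can be seen more transparently by restricting operators to $(\K^m)^{\otimes t}$ is a nice observation that the paper does not make explicitly.
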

\begin{proof}
It is enough to consider the cases $n=m+1$, $t=s$ and $n=m$, $t=s+1$. The case
$n=m+1$, $t=s$ was treated in Section~5 of~\cite{finite}.
For $n=m$ and $t=s+1$, we define
\begin{equation*}
e = \sum_{\lambda\in \Lambda(n, s+1)\colon \lambda_1 \ge 1}  \xi_\lambda.
\end{equation*}
For each  $i= (i_1, \dots, i_s) \in I(n,s)$, write $\bar \imath= (1, i_1, \dots,
i_s) \in I(n,s+1)$. Note that, for $i,j,k,\ell \in I(n,s)$, we have that $(i,j)$
and $(k, \ell)$ are in the same $\Sigma_s$-orbit of $I(n,s)\times I(n,s)$ if and
only if $(\bar \imath,\bar \jmath)$ and
$(\bar k,\bar \ell)$ are in the same $\Sigma_{s+1}$-orbit of $I(n,{s+1})\times
I(n,s+1)$. So there is an injective linear map $\phi \colon S^+(n,s) \to
S^+(n,{s+1})$   defined by
$
\phi (\xi_{ij}) = \xi_{\bar \imath, \bar \jmath}
$.
It is easy to see that   the image of $\phi$ coincides with $eS^+(n,{s+1})e$, and
that $\phi(1) = \phi(\sum_{\lambda \in \Lambda(n,s)}\xi_{\lambda}) = e$.
Next we verify that $\phi$ preserves products.

If $j$, $k \in I(n,s)$ are not in the same $\Sigma_s$-orbit, then $\bar \jmath$ and $\bar k$ are not in the same $\Sigma_{s+1}$-orbit.
Therefore $\, \xi_{i,j}\xi_{k,\ell} =0\,$ and $\, \xi_{\bar\imath,\bar \jmath}\xi_{\bar
k,\bar \ell} =0\,$. Now fix multi-indices $i \leq j \leq \ell$ in $I(n,s)$, and consider the products $\, \xi_{i,j}\xi_{j,\ell}\,$ and $\, \xi_{\bar\imath,\bar\jmath}\xi_{\bar\jmath,\bar \ell}\,$.
For each multi-index  $h$, let $\,h(t)= \left\{u  \,\middle|\,\,h_u=t\,
\right\}$, $t=1, \dots ,n$. Then the stabilizer of $h$ is $ \Sigma_{h(1)} \times \dots \times \Sigma_{h(n)} $. Since we know that $i \leq j \leq \ell$, we have $\ell(1) \subseteq j(1)\subseteq i(1)$, and $\Sigma_{i,j}= \Sigma_{j(1)} \times \dots $.
Therefore, we can choose a transversal $\left\{\sigma\right\}$ of the set of double cosets  $\Sigma_{i,j} \sigma \Sigma_{j,\ell} $ in $\Sigma_j$ so that the restriction of $\sigma$ to $\Sigma_{j(1)}$ is the identity. Now that we have such a transversal, we construct from it a transversal $\bar \sigma$ of the set of double cosets $\Sigma_{\bar\imath,\bar\jmath} \bar \sigma \Sigma_{\bar\jmath,\bar \ell} $ in $\Sigma_{ \bar\jmath}$ in the following way: $\sigma \mapsto \bar \sigma$, where $\bar \sigma_ {\mid_{\bar\jmath(t)}}=  \sigma_{ \mid_{ j(t)} }$, for $t \neq1$, and
$\bar \sigma_ {\mid_{\bar\jmath(1)}}= id$. This can be done because $j(t)= \bar\jmath(t)$, for $t\neq 1$, and $\Sigma_{\bar\imath,\bar\jmath}= \Sigma_{\bar\jmath(1)} \times \dots $. Now we have 
\begin{equation*} 
\phi\left ( \xi_{i,j}\xi_{j,\ell}\right) = \sum_{\sigma}\left[ \Sigma_{i \sigma,
\ell}: \Sigma_{i \sigma,j, \ell}\right]\xi_{\overline{\imath\sigma}, \bar \ell}\,, \quad
\mbox{and} \quad \phi\left ( \xi_{i,j}\right) \phi\left(\xi_{j,\ell}\right) =
\sum_{\sigma}\left[ \Sigma_{\bar\imath\, \bar \sigma,\bar \ell}:
\Sigma_{\bar\imath\, \bar\sigma,\bar\jmath, \bar \ell}\right]\xi_{\bar\imath\,\bar\sigma, \bar \ell}.
\end{equation*}
But  $\bar\imath\bar \sigma=(1, i_{\sigma(1)}, \dots ,i_{\sigma(s)}) =
\overline{\imath\sigma}$. Also, if we write $\Sigma_{i\sigma , \ell}=\Sigma_{\ell(1)} \times X $ and $\Sigma_{i\sigma ,j, \ell}=\Sigma_{\ell(1)} \times Y $, then $\Sigma_{\bar\imath\bar \sigma ,\bar \ell}=\Sigma_{\bar \ell(1)} \times X $ and $\Sigma_{\bar\imath\bar \sigma ,\bar\jmath, \bar \ell}=\Sigma_{\bar \ell(1)} \times Y $. Therefore $\left[ \Sigma_{i \sigma, \ell}: \Sigma_{i \sigma,j, \ell}\right]= \left[ \Sigma_{\bar\imath \bar \sigma,\bar \ell}: \Sigma_{\bar\imath \bar\sigma,\bar\jmath, \bar \ell}\right]$ for all $\sigma$ in the transversal.
\qedhere
\end{proof}

\section{Borel-Schur algebras of wild  representation type}
In this section we will show that all the
Borel-Schur algebras of infinite type, excluding the algebras
$S^+(2,5)$ over fields of characteristic $3$ and the algebras
$S^+(3,2)$ over fields of arbitrary characteristic, are indeed wild.
\subsection{The algebra $S^+(3,3)$}
In this subsection we prove that the algebra $S^+(3,3)$ has  wild representation
type. By Propositions~\ref{subalgebra}~and~\ref{idempotent},
this implies that the algebras $S^+(n,r)$ are wild for all $n\ge 3$ and
$r\ge 3$.

Consider the  subset $X$ of $\Lambda(3,3)$ marked by the black dots
below
\begin{equation*}
\xymatrix@C-2em@R-2em{
\stackrel{030}{\bullet} &&
\stackrel{120}{\bullet} &&
\stackrel{210}{\circ} &&
\stackrel{300}{\circ} \\
&
\stackrel{021}{\bullet} &&
\stackrel{111}{\bullet} &&
\stackrel{201}{\bullet} &&
\\ &&
\stackrel{012}{\circ} &&
\stackrel{102}{\circ} &&
\\ &&&
\stackrel{003}{\circ} &&
}
\end{equation*}
 Let $\quiver$ denote the quiver  of $S^+(3,3)$  and write $\quiver'$ for the subquiver  of $\quiver$ spanned by $X$.  Since existence of an arrow $\alpha \colon
\lambda \to \mu$  in $\quiver$  implies that $\mu$ dominates $\lambda$, we get that the subquiver
$\quiver'$  is convex. Hence by
Proposition~\ref{convex}, $\quiver'$ is the quiver of the basic algebra $eS^+(3,3)e$ for $e = \sum_{\lambda \in
X} \xi_\lambda$.

If $\ch\K=2$, $\quiver'$  has the form
\begin{equation}
\label{quiver332}
\xygraph{
		!{<0ex,0ex>;<11.7ex,0ex>:<0ex,13.5ex>::}
!{(0,0)} :^(0){030}^(1){120}^-{\xi_{122,222}}[rr]
:@{<-}^-{\xi_{122,123}}^(0.9){111}[rd]
:^-{\xi_{113,123}}^(1){201}[rr]:@{}|(0.2){,}[r]
[lll]:@{<-}_-{\xi_{123,223}}^(1){021}[ll]
:^-{\xi_{222,223}}[lu]
[rd]:@/_3ex/_(0.3){\xi_{113,223}}[rrrr]
}	
\end{equation}
where we labelled arrows by the corresponding basis elements  of
$eS^+(3,3)e$. Notice that~\eqref{quiver332} does not contain any path of
length greater than or equal to $3$ .
By direct computation we have that $\xi_{122,123} \xi_{123,223}
\not=\xi_{122,222} \xi_{222,223}$ and $\xi_{113,123} \xi_{123,223} =0$ in
$S^+(3,3)$, and so also in $eS^+(3,3)e$.
Therefore the category  $eS^+(3,3)e\md$ is equivalent to the category
$(\quiver', \xi_{113,123}\xi_{123,223})\rep$.
Denote by $\quiver''$ the quiver obtained from $\quiver'$ by removing the arrow
$\xi_{113,123}$. Since the path $\xi_{113,123}\xi_{123,223}$ does not belong to
$\quiver''$, we get  that
$\quiver''\rep$ embeds into $(\quiver',\xi_{113,123}\xi_{123,223})\rep$. Now
$\quiver''$ is a quiver without relations and oriented cycles. Moreover it is
neither a Dynkin nor an extended Dynkin diagram. Therefore $\quiver''$ is wild. Hence also $(\quiver',\xi_{113,123}\xi_{123,223})$ and $eS^+(3,3)e$ are wild.
This shows that $S^+(3,3)$ is wild when $\ch\K=2$.

When $\ch\K\ge 3$ the quiver $\quiver'$ defined above  has the form
\begin{equation}
\label{quiver333}
\xygraph{
		!{<0ex,0ex>;<11.7ex,0ex>:<0ex,13.5ex>::}
!{(0,0)} :^(0){030}^(1){120}^-{\xi_{122,222}}[rr]
:@{<-}^-{\xi_{122,123}}^(0.9){111}[rd]
:^-{\xi_{113,123}}^(1){201}[rr]:@{}|(0.2){.}[r]
[lll]:@{<-}_-{\xi_{123,223}}^(1){021}[ll]
:^-{\xi_{222,223}}[lu]
}	
\end{equation}
We  have again $\xi_{122,123} \xi_{123,223}
\not=\xi_{122,222} \xi_{222,223}$ in $S^+(3,3)$. Therefore $eS^+(3,3)e$ is
hereditary in this case. As~\eqref{quiver333} is not of Dynkin type we conclude
that $eS^+(3,3)e$ and, hence, also $S^+(3,3)$ have wild representation type.

\subsection{The algebra $S^+(4,2)$}
We consider now the algebra  $S^+(4,2)$ and show that it has wild representation type if
the characteristic of the base field $\K$ is $p\ge 3$. 
 In view of Proposition~\ref{subalgebra} and
Proposition~\ref{idempotent}, this will imply that the algebras $S^+(n,r)$ are  wild for all $n\ge 4$, $r\ge 2$, and $\ch \K \ge 3$.

We consider the idempotent
\begin{equation*}
e = \xi_{24,24} + \xi_{23,23} + \xi_{12,12}
\end{equation*}
in $S^+(4,2)$, and write $A = e S^+(4,2) e$. We are going to compute the quiver of $A$. It has
three vertices corresponding to the primitive idempotents $\xi_{12,12}$, $\xi_{23,23}$,
$\xi_{24,24}$. Now, to obtain a basis of rad $A$, note that
\begin{equation*}
\begin{aligned}
\xi_{23,23} A \xi_{24,24}& = \K \xi_{23,24}, \quad
\xi_{12,12} A \xi_{24,24}& = \K \xi_{12,24}\oplus \K \xi_{12,42}, \\
\xi_{12,12} A \xi_{23,23}& = \K \xi_{12,23} \oplus \K\xi_{12,32}.
\end{aligned}
\end{equation*} So $\left\{\,\xi_{23,24}, \xi_{12,24}, \xi_{12,42}, \xi_{12,23}, \xi_{12,32}\,\right\}$ is a basis of rad $A$.
On the other hand,
\begin{equation*}
\xi_{12,23} \xi_{23,24} = \xi_{12,24},\quad \xi_{12,32} \xi_{23,24} =
\xi_{12,42},
\end{equation*} which implies that rad$^2 A$ has basis $\,\left\{ \xi_{12,24}, \xi_{12,42}\, \right\}$.
Thus $A$ is the path algebra of the quiver
\begin{equation}
\label{subquiver42}
\xygraph{
		!{<0ex,0ex>;<7.8ex,0ex>:<0ex,9ex>::}
[rr]:[l]:@/^2ex/[l][r]:@/_2ex/[l]
	}
\end{equation}
without relations. Since \eqref{subquiver42} is neither a Dynkin nor an extended
Dynkin diagram, it has wild representation type. Hence
the algebra
$S^+(4,2)$ is wild.

\subsection{The algebras $S^+(2,r)$.}
Let $p$ be the characteristic of the base field. 
In this section we show that $S^+(2,p+1)$ has  wild representation type if
$p\ge 5$, and that the same result holds for $S^+(2,4)$  if $p=2$, and for $S^+(2,6)$  if $p=3$.
In Section~\ref{lastpar}  we will prove that $S^+(2,5)$ has tame representation type if $p=3$.  By Propositions~\ref{subalgebra}~and~\ref{idempotent}, this completes the classification of all
algebras $S^+(2,r)$ of infinite representation type.

We change slightly the notation introduced in Section~\ref{schur_preliminaries}, namely
we write $\alpha$ for $\alpha_0$, $\beta$ for $\alpha_1$, and
$\gamma$ for $\alpha_2$, for
the types of arrows in the quiver of  $S^+(2,r)$.
This is convenient since the types $\alpha_s$ with $s\ge 3$ will not appear in this
section.
Further, we will identify compositions $\lambda = (\lambda_1, \lambda_2) \in
\Lambda(2,r)$ with $\lambda_2$. This will not create an ambiguity, since
$r$ will be fixed in each particular case and $\lambda_1 = r - \lambda_2$.

\subsubsection{The algebra $S^+(2,p+1)$ over a field of characteristic  $p\ge 7$}
We described the quiver $\quiver$ of $S^+(2,p+1)$ and the corresponding
relations $R$ in Section~\ref{schur_preliminaries}.
Denote by $\quiver'$ the subquiver
\begin{equation}
	\label{quiver}
	\xygraph{
		!{<0ex,0ex>;<11ex,0ex>:<0ex,9ex>::}
[rrrrrrr]:^(0){4}|(0){\bullet}_{\alpha}[l]
:^(0){3}^(1){2}_{\alpha}[l]
:^(1){1}_{\alpha}[l]="r"
:^(1.2){0}_{\alpha}[l]="b"
[ur]="t":_(0){p+1}_(1){p}^{\alpha}[l]="l"
:_(1){p-1}^{\alpha}[l]
:_(1.0){p-2}^{\alpha}[l]
"t":^{\beta}"r" "l":_{\beta}"b"
	}	
\end{equation}
of $\quiver$.
We used here the fact that $\ch\K\ge 7$, as otherwise different vertices in~\eqref{quiver}
would have the same labels in $\Lambda(2,p+1)$, and so~\eqref{quiver}
would not be a subquiver of $\quiver$.
Now, $\ch\K\ge 7$ implies that the sets of relations $\rst{(\alpha)^p_\lambda}{\quiver'}$ and
$\rst{(\beta)^p_\lambda}{\quiver'}$ are empty for all
$\lambda \in \quiver'_0$.
Hence $\rst{R}{\quiver'}=\left\{ (\beta\alpha)_{p+1} - (\alpha \beta)_{p+1} \right\}$.
 The pair $(\quiver',\rst{R}{\quiver'})$ is isomorphic to the XXVIIIth
quiver in
Ringel's list in~\cite{Ri} of minimal wild quivers with one relation.
By Proposition~\ref{subquiver}, we get that $S^+(2,p+1)$ is wild over a field
of characteristic no less than $7$.

\subsubsection{The algebra $S^+(2,6)$ over a field of characteristic  $5$}
The quiver $\quiver$ of the algebra $S^+(2,6)$ over a field of characteristic
$5$  is given by
\begin{equation}
	\label{quiver265}
		\xygraph{
!{<0cm,0cm>;<12ex,0ex>:<0cm,8ex>::}
[rrrrr]:^(0.8){5}_-{\alpha}_(-0){6}[l]:^-{\alpha}_(1){4}[l]:^-{\alpha}_(1){3}[l]:^-{\alpha}_(0.9){2}[l]:_-{\alpha}_(1.1){1}[l]:^-{\alpha}^(1){0}[l]
[rrrrrr]:@/^6ex/^{\beta}[lllll]
[rrrr]:@/_6ex/_{\beta}[lllll]
}
\end{equation}
and the corresponding relations are
$R = \left\{\ (\alpha^5)_{6}\, ,\ (\alpha^5)_{5}\, ,\ (\alpha\beta-
\beta\alpha)_{6}\ \right\}$. 
Next we consider the quiver $\widetilde{\quiver}$
\begin{equation}
	\label{cover265}
	\xygraph{
!{<0cm,0cm>;<1.0cm,0cm>:<0cm,1.0cm>::}
[drrrrrrrrrrr]
:_(0){3''}_(1){2''}^{\alpha}[l]
:_(0.8){1''}^{\alpha}[l]
:^(1.0){0''}^{\alpha}[dl]
:@{<-}^{\beta}[ul]
:_(0){5''}_(1){4''}^{\alpha}[l]
:_(1){3'}^{\alpha}[l]
:_(1){2'}^{\alpha}[l]
:_(0.8){1'}^{\alpha}[l]
:^(1.0){0'}^{\alpha}[ld]
:@{<-}^{\beta}[ul]
:_(0){5'}_(1){4'}^{\alpha}[l]
:@{-}[d]:@{-}@/_0.3cm/[dr]:@{-}^{\alpha}
[rrrrrrrrr]:@{-}@/_0.3cm/[ur]:[u]
[ll]:@{<-}_(1.3){6''}_{\beta}[ul]:_{\alpha}[dl]
[llll]:@{<-}_(1.3){6'}_{\beta}[ul]:_{\alpha}[dl]
}
\end{equation}
with a free action of the symmetric group $\Sigma_2$ given by
interchanging $'$ with $''$.
The quiver \eqref{quiver265} is the quotient of \eqref{cover265} under this
action when we identify the orbit $\{s',\ s''\}$ with the vertex $s$
of the quiver~\eqref{quiver265}.
As the quotient map preserves the types of arrows, we get that
\begin{equation*}
R_{\Sigma_2} = \left\{\
(\alpha^5)_{6'}\, ,
(\alpha^5)_{6''}\, ,
(\alpha^5)_{5'}\, ,
(\alpha^5)_{5''}\, ,
(\alpha\beta - \beta\alpha)_{6'}\, ,\
(\alpha\beta - \beta\alpha)_{6''} \
 \right\}.
\end{equation*}
Denote by $\widetilde{\quiver}'$ the subquiver
\begin{equation}
	\label{subcover265}
	\xygraph{
!{<0cm,0cm>;<1.0cm,0cm>:<0cm,1.0cm>::}
[drrrrrrrrrrr]
:_(0){4'}^{\alpha}[l]
:_(0){3''}_(1){2''}^{\alpha}[l]
:_(0.8){1''}^{\alpha}[l]
:^(1.0){0''}^{\alpha}[dl]
:@{<-}^{\beta}[ul]
:_(0){5''}_(1){4''}^{\alpha}[l]
:_(1){3'}^{\alpha}[l]
[rrrr]:@{<-}_(1.3){6''}_{\beta}[ul]:_{\alpha}[dl]
}
\end{equation}
of $\widetilde{\quiver}$.
As $\widetilde{\quiver}'$ does not contain paths of type $\alpha^5$ and
$6'\not\in \widetilde{\quiver}_0'$, we get
that
\begin{equation*}
\rst{R_{\Sigma_2}}{\widetilde{\quiver'}} = \left\{ \ (\alpha\beta-\beta\alpha)_{6''} \right\}.
\end{equation*}
The pair $(\widetilde{\quiver}', \rst{R_{\Sigma_2}}{\widetilde{\quiver'}} )$ is isomorphic to the XVIIIth
quiver in Ringel's list in~\cite{Ri} of minimal wild quivers with one  relation.
By Theorem~\ref{covering_criterion},  the quiver $(\quiver,R)$ is wild.
This shows that $S^+(2,6)$ has wild representation type over  fields of
characteristic~$5$.
\subsubsection{The algebra $S^+(2,6)$ over a field of characteristic $3$.}
In this section we show that 
if the base field has characteristic $3$ then 
the algebra $S^+(2,6)$ is wild. The quiver $\quiver$ of $S^+(2,6)$ is
\begin{equation}
	 \label{quiver263}
	 \xygraph{
!{<0cm,0cm>;<1cm,0cm>:<0cm,1cm>::}
[rrrrrr]:^{\alpha}_(0)6[l]:^{\alpha}_(0)5_(1)4[l]:^{\alpha}_(1)3[l]:^{\alpha}_(1)2[l]:^{\alpha}_(1)1[l]:^{\alpha}_(1)0[l]
[rrrrr]:@/_5ex/_{\beta}[lll]
[rr]:@/^5ex/^{\beta}[lll]
[rr]:@/_5ex/_{\beta}[lll]
[rrrrrr]:@/^5ex/^{\beta}[lll]
	 }
 \end{equation}
and the corresponding relations are
\begin{equation*}
\begin{aligned}
R : = & \left\{
 (\alpha^3)_{6}\,,\
(\alpha^3)_{5}\,,\
(\alpha^3)_{4}\,,\
(\alpha^3)_{3}\, , \right.
\\[2ex]  &\phantom{=} \left.
(\alpha \beta -\beta\alpha)_{6}\,,\
(\alpha \beta -\beta\alpha)_{5}\,,\
(\alpha \beta -\beta\alpha)_{4}\
 \right\}.
\end{aligned}
\end{equation*}
 Let us consider the quiver $\widetilde{\quiver}$
 \begin{equation}
	 \label{cover263}
	 \xygraph{
	 !{<0cm,0cm>;<1.2cm,0cm>:<0cm,1.2cm>::}
[ddrrrrrr]
:^{\alpha}_(0){5''}_(0.8){4''}[l]
:^{\alpha}_(0.6){3''}[l]
:_{\alpha}^(1){2'}[l]
:_{\alpha}^(1){1'}[l]
:_{\alpha}^(1){0'}[l]
:^{\beta}@{<-}[u]
:@{-}@/_0.3cm/[ld]
:@{-}[d]
:^{\alpha}@{-}@/_0.3cm/[dr]
:@{-}[rrrrr]
:@(r,r)[u]
:_{\alpha}^(0){2''}^(1){1''}[l]
:_{\alpha}^(1){0''}[l]
:^{\beta}@{<-}[u]
[rr]
:^{\beta}[d] [lu]
:^{\beta}[d]
[ull]
:^{\beta}@{<-}[u]
:_{\alpha}_(0){5'}_(1){4'}[l]
:^{\alpha}_(0.7){3'}[l] [r]
:_{\beta}[d]
[rrrr]
:^{\alpha}@{<-}_(1){6''}[r]
:@{-}[u]
:^{\beta}@{-}@/_0.3cm/[ul]
:@{-}[llll]
:@/_0.3cm/[ld]
[rr]
:^{\alpha}@{<-}^(1){6'}[r]
:_{\beta}[d]
	 }
 \end{equation}
 with the action of $\Sigma_2$ given
  by swapping $'$ and $''$.
The quotient $\widetilde{\quiver}/\Sigma_2$ is isomorphic to the
quiver~\eqref{quiver263} and the canonical projection preserves the types of
arrows.
Therefore
\begin{equation*}
R_{\Sigma_2} = \left\{\, (\alpha^3)_{s'}\,,\ (\alpha^3)_{s''}\,,\ (\alpha\beta -\beta\alpha)_{t'}\,,\
(\alpha\beta-\beta\alpha)_{t''}\,\middle|\, 3\le s \le 6, \,4\le t \le 6
\right\}.
\end{equation*}
We consider the following subquiver $\widetilde{\quiver}'$ of \eqref{cover263}
\begin{equation}
\label{subquiver263}
 \xygraph{
	 !{<0cm,0cm>;<1.2cm,0cm>:<0cm,1.2cm>::}
[rrrrrrdd]
:_{\beta}@{<-}_(0.2){0'}[u]
:_{\alpha}_(0){3'}^(1){2''}[l]
:_{\alpha}^(1){1''}[l]
:_{\beta}@{<-}[u]
:_{\alpha}_(-0.2){4''}^(1){3''}[l]
:_{\alpha}^(1){2'}[l]
:^{\beta}@{<-}[u]
:_{\alpha}_(0){5'}_(1){4'}[l][r]
:^{\alpha}@{<-}^(1){6'}[r]
:^{\beta}[d]
}
\end{equation}
As $\widetilde{\quiver}'$ does not contain any path of type $\alpha^3$ and contains only one
square with the border paths of types $\alpha\beta$ and $\beta\alpha$, we get that $\rst{R}{\widetilde{\quiver}'} = \left\{\ (\alpha\beta-\beta\alpha)_{6'}\  \right\}$. 
The quiver with relations $(\widetilde{\quiver}',  \rst{R}{\widetilde{\quiver}'})$ is
isomorphic
to the XXIXth quiver in Ringel's list of minimal wild quivers with one
relation provided in~\cite{Ri}.
From Theorem~\ref{covering_criterion}, we conclude that $(\quiver, R)$ is wild.
This implies that $S^+(2,6)$ is wild over a field of characteristic $3$.

\subsubsection{The algebra $S^+(2,4)$ over a field of characteristic $2$.}

The last algebra to be analyzed in this section is $S^+(2,4)$ over a base field
of characteristic $2$. We will show that this  algebra is wild.

Given a quiver $\quiver$, the associated \emph{separated quiver} $\sep(\quiver)$
is the quiver with the set of vertices $\left\{\, v',\ v'' \,\middle|\, v\in
\quiver_0
\right\}$ and arrows $\bar\varepsilon \colon v' \to w''$ for every $\varepsilon\colon
v \to w$ in $\quiver$. Let $R$ be any set of relations for $\quiver$. 
According to Gabriel (see~\cite{gabriel_Dynkin}) the category of representations of
$\sep(\quiver)$ is equivalent to the category of representations of
$(\quiver,R)$ whose Loewy length does not exceed $2$. 
From this and from the classification of tame hereditary quivers it follows that if
$(\quiver,R)$ is tame then $\sep(\quiver)$ is a union of Dynkin and extended
Dynkin diagrams. 

The quiver of the algebra $S^+(2,4)$ over a base field of characteristic
$2$ is
\begin{equation*}
	\xygraph{
	!{<0cm,0cm>;<1.2cm,0cm>:<0cm,1.2cm>::}
	[rrrr]:_{\alpha}_(-0.2){4.}_(1)3[l]:^{\alpha}^(1)2[l]:^{\alpha}_(1)1[l]:_{\alpha}_(1.2)0[l]
	:@{<-}@/^4ex/[rr]^{\beta}:@{<-}@/^4ex/[rr]^{\beta}:@/^8ex/[llll]^{\gamma}
	[r]:@{<-}@/_4ex/[rr]_{\beta}
	} 
\end{equation*}
The corresponding separated quiver has three connected components: two isolated
vertices $0'$, $4''$, and 
\begin{equation*}
	\xygraph{
	!{<0cm,0cm>;<0.86cm,-0.5cm>:<0cm,1cm>::}
:^(0){1'}^(0.9){0''}[r]
:^(1){2'}@{<-}[ru]
:^(1){1''}[r]
:^(1){3'.}@{<-}[d]
:^(1){2''}[ld]
:^(0.9){4'}@{<-}[l]
:[u][d]:_(1){3''}[ld]
	} 
\end{equation*}
Since the above quiver is neither Dynkin nor extended Dynkin diagram, we
conclude that $S^+(2,4)$ is wild if the characteristic of the base field
is~$2$. 

\section{Tame representation type: the degeneration technique and $S^+(3,2)$.}

We proved in~\cite{finite} that the Borel-Schur algebra $S^+(3,2)$ has infinite
representation type
(independently of
the characteristic of the ground field $\K$).
The aim of this section is to show that $S^+(3,2)$ is tame. For this we will use degeneration
techniques developed by Gabriel in~\cite{gabrieldegen} and by Geiss in~\cite{geiss}.

Given a vector space $V$, denote by $\alg(V)$ the variety of associative algebra structures with identity
on $V$. Each such structure is determined by the multiplication map $\mu \colon V\otimes V \to V$. Hence
we can consider $\alg(V)$ as a  subset of the affine space $\Hom  _{\K}(V\otimes V, V)$.
The group $\Gl(V)$ acts on $\Hom  _{\K}(V\otimes V, V)$ by
$\,
g  \mu= g\circ \mu \circ (g^{-1} \otimes g^{-1}),
$ all $g \in \Gl(V)$. This action preserves $\alg(V)$.

A product $\mu_0 \in \alg(V)$ is called a \emph{degeneration} of
$\mu \in \alg(V)$ if $\mu_0$ lies in the Zariski closure of the
$\Gl(V)$-orbit of~$\mu$.

We will use the following result proved in~\cite{geiss}.
\begin{theorem}[{\cite{geiss}}]\label{geiss}
Let $\mu_0$ be a degeneration of $\mu$ in $\alg(V)$.
If $\mu_0$ is not wild, then the same holds for $\mu$.
\end{theorem}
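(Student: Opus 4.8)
The plan is to deduce this from the Drozd dichotomy \cite{drozd} together with the structure theory of tame algebras, after reducing to a one-parameter degeneration. Write $A=(V,\mu)$ and $A_0=(V,\mu_0)$ for the two algebras; it is equivalent to prove the contrapositive, so assume $A$ is wild and aim to show that $A_0$ is wild. First I would pass to a one-parameter family. Since $\Gl(V)$ is reductive and $\mu_0\in\overline{\Gl(V)\cdot\mu}$, the Hilbert--Mumford criterion gives a one-parameter subgroup $\lambda\colon\mathbb{G}_m\to\Gl(V)$ with $\lim_{t\to 0}\lambda(t)\cdot\mu=\mu_0$. The morphism $\mathbb{A}^1\to\alg(V)$, $t\mapsto\lambda(t)\mu$ (with value $\mu_0$ at $t=0$), makes $V\otimes_\K\K[t]$ into a $\K[t]$-algebra; localizing at $t=0$ we obtain a $\K[t]_{(t)}$-algebra $\mathcal{A}$, free of rank $\dim V$ over the discrete valuation ring $R:=\K[t]_{(t)}$, with special fibre $\mathcal{A}/t\mathcal{A}\cong A_0$ and generic fibre $\mathcal{A}\otimes_R L\cong A\otimes_\K L$ (the isomorphism given by $\lambda(t)\in\Gl(V)(L)$), where $L=\K(t)$. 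As representation type does not change under extension of the base field, $A\otimes_\K L$ is again wild, so it suffices to prove that $A\otimes_\K L$ wild forces $A_0$ wild.

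Next I would transport a wild family through the degeneration. Fix an $A\otimes_\K L$-$L\langle u,v\rangle$-bimodule $Z$ witnessing wildness, free of finite rank $d$ over $L\langle u,v\rangle$. Clearing denominators one finds a lattice: an $\mathcal{A}$-$R\langle u,v\rangle$-bimodule $\mathcal{Z}$, free of rank $d$ over $R\langle u,v\rangle$ (this exists because $R\langle u,v\rangle$ is a free ideal ring), with $\mathcal{Z}\otimes_R L\cong Z$. For $n\ge 1$ the rule $(P,Q)\mapsto\mathcal{Z}\otimes_{R\langle u,v\rangle}(R^n,P,Q)$, with $(P,Q)$ a pair of $n\times n$ matrices over $R$, is a family of $\mathcal{A}$-modules free of rank $nd$ over $R$; let $\mathcal{S}\subseteq\mathrm{mod}_{\mathcal{A}/R}(nd)$ be the $\Gl_{nd,R}$-saturation of the closure of its image in the relative module scheme $\mathrm{mod}_{\mathcal{A}/R}(nd)\to\spec R$. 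Then $\mathcal{S}$ is irreducible and dominates $\spec R$, and --- since the family is already defined over $R$ --- its special fibre $\mathcal{S}_0\subseteq\mathrm{mod}_{A_0}(nd)$ is non-empty. Over the generic point, $\mathcal{S}$ restricts to the $\Gl$-saturation of the image of $(P,Q)\mapsto Z\otimes(L^n,P,Q)$; as $Z\otimes_{L\langle u,v\rangle}-$ is a representation embedding, this restriction has indecomposable generic member and satisfies $\dim\mathcal{S}_\eta-\delta(\mathcal{S}_\eta)= n^2+1\ge 2$, where $\delta(\cdot)$ denotes the dimension of a generic $\Gl$-orbit. Because $t$ is a non-zero-divisor on $\mathcal{S}$, every irreducible component $W$ of $\mathcal{S}_0$ has $\dim W=\dim\mathcal{S}-1=\dim\mathcal{S}_\eta$; and since the function sending a point of $\mathcal{S}$ to $\dim\End$ of the represented module is upper semicontinuous (with the generic fibre dense in $\mathcal{S}$), we get $\delta(W)\le\delta(\mathcal{S}_\eta)$. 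Hence $\dim W-\delta(W)\ge 2$: the variety $\mathrm{mod}_{A_0}(nd)$ contains a two-parameter family $W$.

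The hard part --- and the technical heart of Geiss's argument --- is to turn $W$ into a genuine witness of wildness of $A_0$. If $A_0$ had finite representation type then $\dim W=\delta(W)$, a contradiction; so assume $A_0$ is tame, whence by definition in every dimension all but finitely many indecomposables lie in finitely many one-parameter families $N_i\otimes_{\K[x]}\K[x]/(x-\lambda)$. The obstruction is that a family of indecomposables can degenerate into decomposables --- the specialised bimodule $\mathcal{Z}/t\mathcal{Z}$ need not be a representation embedding --- so a priori the generic member of $W$ need not be indecomposable; excluding this is exactly the content of the theorem. I would resolve it by analysing how isomorphism classes, endolength and the number of indecomposable direct summands behave when $\mathcal{Z}$ is specialised at $t=0$: over $L$ distinct $L\langle u,v\rangle$-modules give non-isomorphic modules, and endolength and the number of summands vary semicontinuously along $\mathcal{S}$; combining this with Crawley--Boevey's description of the parametrising bimodules of a tame algebra, one shows that the finite list of one-parameter families of $A_0$ cannot account for the two-parameter family $W$. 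This contradiction forces $A_0$ to be wild, against our assumption, and the theorem follows. The step I expect to be the main obstacle is precisely this last one: controlling the degeneration of the wild data so that indecomposability --- equivalently, the representation-embedding property --- is not lost in the limit.
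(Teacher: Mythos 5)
The paper does not actually prove this statement; it is imported verbatim from Geiss's article \cite{geiss}, so there is no ``paper's own proof'' to compare against. What Geiss does is different and considerably shorter than your sketch. He works with de la Pe\~na's numerical invariant: for an irreducible $\Gl_d$-stable subvariety $C$ of the module variety $\mathrm{mod}_A(d)$ set $a(C)=\dim C-\min_{M\in C}\dim O_M$, and put $p_A(d)=\max_C a(C)$. De la Pe\~na showed that $A$ is not wild if and only if $p_A(d)\le d$ for all $d$ (roughly: tame gives at most linear growth of the parameter count, wild gives quadratic growth). Geiss then proves that $p_A(d)$ is upper semicontinuous on $\alg(V)$, so $p_{A_0}(d)\ge p_A(d)$ for a degeneration, and the theorem follows immediately. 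This cleanly sidesteps any discussion of whether indecomposability survives in the limit --- which is precisely the point you flag as the hard part of your own approach.

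There is a genuine gap in your argument at ``$\dim W-\delta(W)\ge 2$: the variety $\mathrm{mod}_{A_0}(nd)$ contains a two-parameter family $W$.'' A two-parameter family in the module variety does \emph{not} contradict tameness of $A_0$: for the Kronecker algebra, in total dimension $2k$ the direct sums $R_{\lambda_1}\oplus\cdots\oplus R_{\lambda_k}$ of regular simples already give a $k$-parameter family of \emph{decomposable} modules, so $\dim W-\delta(W)$ can grow without bound for a tame algebra. You acknowledge that the generic member of $W$ need not be indecomposable and write ``excluding this is exactly the content of the theorem,'' but the proposed remedy (semicontinuity of endolength and of the number of summands, plus Crawley--Boevey) is left as a gesture, and the asserted conclusion ``the finite list of one-parameter families of $A_0$ cannot account for the two-parameter family $W$'' is, as stated, false. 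What actually closes the argument is a sharper count: with $d=\mathrm{rank}\,Z$, the family $Z\otimes(L^n,P,Q)$ sits in dimension $nd$ and carries $n^2+1$ parameters, and for $n\ge d$ one has $n^2+1>nd$; propagating $p_{A_0}(nd)\ge n^2+1>nd$ through the degeneration and then invoking de la Pe\~na's theorem does the job. (The appeal to Hilbert--Mumford at the start is also not quite right --- the criterion only reaches the unique closed orbit inside $\overline{\Gl(V)\cdot\mu}$, not an arbitrary boundary point --- but this is repaired by the standard curve-lemma reduction to a degeneration over a discrete valuation ring, and is minor compared with the main gap.)
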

One way to construct a degeneration of $\mu \in \alg(V)$ is as
follows. Fix a basis $\left\{\,v_1, \dots, v_m\, \right\}$ of $V$, such that
$v_1$ is the identity element for $\mu$. Then $\mu$ determines the
multiplication
constants $\gamma_{hl}^k$ by
\begin{equation*}
\mu (v_h \otimes v_l) = \sum_{k=1}^m \gamma_{hl}^k v_k.
\end{equation*}
Suppose we have a function $\phi\colon \left\{ 1,\dots,m \right\} \to \N$
such that $\phi(k) -\phi(h) - \phi(l) \ge 0$, for every triple $(h,l,k)$ satisfying $\gamma_{hl}^k \not=0$. We also assume that $\phi(1)=0$.   For each $t\in \K^*$, define the linear isomorphism $g_t \colon V \to V$ by
$\,
g_t (v_h) = t^{\phi(h)} v_h, \,\, h=1, \dots, m.
$
Then we obtain another algebra with  the product $\mu_t := g_t\mu$ satisfying
\begin{equation*}
(\mu_t)(v_h \otimes v_l ) = g_t \left( \mu ( g_t^{-1} v_h  \otimes g_t^{-1} v_l) \right) =
\sum_{k=1}^m t^{\phi(k)- \phi(h) - \phi(l) } \gamma_{hl}^k v_k.
\end{equation*}
Since $\phi(k) -\phi(h)-\phi(l)\ge 0$ if $\gamma^{k}_{hl}\not=0$, we can substitute $t$ by $0$ in this formula.  We obtain a new  product
$\mu_0$ which  is well defined. It follows from the associativity of the products
$\mu_t$ for $t\not=0$ that $\mu_0$ is associative. Since
$\phi(1)=0$, it is easy to check that $v_1$ is the identity for $\mu_0$.
Let $G$ be the subgroup $\left\{ g_t \middle| t \in \K^* \right\}$  of $\Gl(V)$. Then $ \mu_0$ is in the Zariski closure of $G\mu$. Henceforth it is also in the
Zariski closure of $\Gl(V)\mu$.

Now we apply this technique to the algebra $S^+(3,2)$.
Its quiver depends on the characteristic of the base field:
\begin{equation}
\label{quiver32}
\begin{array}{cc}
(a)\,\ch\K=2 & (b)\,\ch\K\not=2\\[1ex]
\xygraph{
		!{<0ex,0ex>;<6ex,0ex>:<0ex,7ex>::}
!{(1,2)} :|(-0.1){002}^(1){011}_(0.4){\xx{23}{33}}[lu] :_(0.3){\xx{22}{23}}[lu]
:^(-0.2){020}^(1){110}^{\xx{12}{22}}[rr]:^(1.1){200}^{\xx{11}{12}}[rr]
[llld]:_(1.1){101}^{\xx{13}{23}}[rr]:_(0.4){\xx{12}{13}}[lu]
[dd]:@/^6ex/^{\xx{22}{33}}[lluu]
:@/^5ex/^{\xx{11}{22}}[rrrr]
	} & 
\xygraph{
		!{<0ex,0ex>;<6ex,0ex>:<0ex,7ex>::}
!{(1,2)} :|(-0.1){002}^(1){011}^(0.4){\xx{23}{33}}[lu] :^{\xx{22}{23}}[lu]
:^(-0.2){020}^(1){110}^{\xx{12}{22}}[rr]:^(1){200}^{\xx{11}{12}}[rr]
[llld]:_(1.1){101}^{\xx{13}{23}}[rr]:_(0.4){\xx{12}{13}}[lu]
	} 
\end{array}
\end{equation}
Next we compute the multiplication table for $S^+(3,2)$ with respect to the
$\xi$-basis 
\bigskip

\renewcommand{\arraystretch}{1.2}
\begin{tabular}[h]{cccccccc}
\multicolumn{3}{c}{Multiplication at $(1,0,1)$} && \multicolumn{2}{c}{Multiplication at $(0,1,1)$} \\
& \multicolumn{1}{|c}{\xx{13}{23}} & \xx{13}{33} &&&
\multicolumn{1}{|c}{\xx{23}{33}} \\
\cline{1-3} \cline{5-6}
\multicolumn{1}{c|}{\xx{11}{13}} &
\xx{11}{23} & 2\xx{11}{33} & &
\multicolumn{1}{c|}{\xx{11}{23}} &
2 \xx{11}{33}
\\[1ex]
\multicolumn{1}{c|}{\xx{12}{13}} &
\xx{12}{23} & \xx{12}{33} &&
\multicolumn{1}{c|}{\xx{12}{32}} &
\cellcolor[gray]{0.9} \xx{12}{33} \\
&&&&
\multicolumn{1}{c|}{\xx{12}{23}} &
\xx{12}{33} \\
\multicolumn{3}{c}{Multiplication at $(0,2,0)$}
&&
\multicolumn{1}{c|}{\xx{13}{23}} &
\xx{13}{33} \\
& \multicolumn{1}{|c}{\xx{22}{33}}
& \xx{22}{23}
&&
\multicolumn{1}{c|}{\xx{22}{23}} &
\cellcolor[gray]{0.9}2 \xx{22}{33} \\
\cline{1-3}
\multicolumn{1}{c|}{\xx{11}{22}} &
\xx{11}{33} & \cellcolor[gray]{0.9} \xx{11}{23} \\
\multicolumn{1}{c|}{\xx{12}{22}} &
\cellcolor[gray]{0.9}\xx{12}{33} & \cellcolor[gray]{0.9} \xx{12}{23} + \xx{12}{32}\\ \\
\multicolumn{6}{c}{Multiplication at $(1,1,0)$} \\
 & \multicolumn{1}{|c}{\xx{12}{23}} &
\xx{12}{33} & \xx{12}{13} & \xx{12}{22} & \xx{12}{32} \\
\cline{1-6}
\multicolumn{1}{c|}{\xx{11}{12}} &
\xx{11}{23} & 2\xx{11}{33} & \phantom{=} \xx{11}{13} \phantom{=}&
\cellcolor[gray]{0.9} 2\xx{11}{22} &
\cellcolor[gray]{0.9} \xx{11}{23}
\end{tabular}
\label{t:first}
\renewcommand{\arraystretch}{1}
\bigskip

\noindent Note that we do not list
the trivial products $\xi_{i,j} \xi_{i',j'} = 0$ in case $j$ and $i'$ are not in the same $\Sigma_r$-orbit, and
$\xi_{i,i} \xi_{i,j} = \xi_{i,j}= \xi_{i,j} \xi_{j,j}$.
Given $t\in \K^*$, we define the diagonal automorphism  $g_t$ of $S^+(3,2)$
by multiplying
\begin{equation*}
\xx{11}{22},\ \xx{22}{33},\ \xx{11}{13},\ \xx{12}{13},\ \xx{13}{23},\
\xx{13}{33},
\end{equation*}
 with $t$,
\begin{equation*}
\xx{11}{33},\ \xx{11}{23},\ \xx{12}{23},\ \xx{12}{33}
\end{equation*}
with $t^{2}$,
and by fixing all the other basis elements. The induced product
$*_t$ coincides with the original product for all pairs of elements, except
the following ones:
\begin{equation*}
\begin{aligned}
\xx{12}{32} *_t \xx{23}{33} & = t^2 \xx{12}{33}, &
\xx{22}{23} *_t \xx{23}{33} & = 2 t \xx{22}{33}, \\[2ex]
\xx{11}{22} *_t \xx{22}{23} & =  t \xx{11}{23}, &
\xx{12}{22} *_t \xx{22}{33} & = t \xx{12}{33} \\[2ex]
\xx{11}{12} *_t \xx{12}{22} & = 2t \xx{11}{22}, &
\xx{11}{12} *_t \xx{12}{32} & = t^2 \xx{11}{23}
\end{aligned}
\end{equation*}
and
\begin{equation}
\label{long}
\xx{12}{22} *_t \xx{22}{23}  = \xx{12}{32} + t^2 \xx{12}{23}.
\end{equation}
We shaded the corresponding cells in the multiplication table for $S^+(3,2)$.
Now, by setting $t=0$, we get a new product $ *_0$ on the vector space
$S^+(3,2)$. Denote the resulting algebra by $B$.
We can identify from the multiplication table for $S^+(3,2)$ a basis of $\rad^2 B$.
Namely, every non-shaded cell in which the basis element has coefficient
$1$ give an element of $\rad^2 B$ independently of the characteristic of $\K$, i.e.
\begin{equation}
\label{list}
\xx{11}{23},\ \xx{12}{23},\ \xx{12}{33},\ \xx{13}{33},\
\xx{11}{33},\ \xx{11}{13}
\end{equation}
are in $\rad^2 B$. Now, the non-shaded cells in which the basis element has
coefficient $2$ could give  extra elements  of $\rad^2 B$ if
$\ch \K \not=2$. But   only  $\xx{11}{33}$ appears in these cells and it
is already in the list~\eqref{list}. One more element of $\rad^2 B$ comes
from~\eqref{long}, namely $\xx{12}{32}$. We obtain in this way a basis of $\rad^2 B$.
Taking the complement of the computed basis of $\rad^2 B$ inside the basis of  $\rad B$, we get
that the images of
\begin{equation*}
\xx{11}{12},\ \xx{11}{22},\ \xx{12}{22},\ \xx{12}{13},\ \xx{22}{23},\ \xx{22}{33},\
\xx{13}{23},\ \xx{23}{33}
\end{equation*}
by the canonical epimorphism $\rad B \to \rad B/\rad^2 B$ form a basis of $\rad
B/\rad^2 B$. Now, it is easy to verify, that the quiver of $B$ coincides with the
quiver~\eqref{quiver32}(a) of $S^+(3,2)$ over a field of characteristic $2$. 
From the multiplication table we know that the following products vanish
\begin{equation*}
\xx{22}{23}*_0\xx{23}{33},\quad \xx{12}{22}*_0\xx{22}{33},\quad
\xx{11}{22}*_0\xx{22}{23},\quad
\xx{11}{12}*_0\xx{12}{22}.
\end{equation*}
Therefore, by inspecting the quiver of $B$, we see that $B$ is
 a special biserial algebra.
By a classification result of Wald and Waschb\"{u}sch~\cite{wald}, we get  that
the representation type of $B$ is
  either finite
or tame. In other words $B$ is not wild. Hence by Proposition~\ref{geiss},  the
Borel-Schur algebra $S^+(3,2)$ does not have wild representation type.  We proved in~\cite{finite} that $S^+(3,2)$ has infinite representation
type, hence  $S^+(3,2)$ is tame.

\section{Tame representation type: the algebra $S^+(2,5)$ over a field of characteristic $3$.}\label{lastpar}

To finish classifying the representation type of  Borel-Schur algebras,  we need  to study  $S^+(2,5)$ over a field
$\K$ of characteristic $3$. In this section we show that this algebra has tame representation type.  We will use a combination of Auslander-Reiten
theory and poset representation theory  as described by  Ringel in~\cite{Ri}.

Given a finite dimensional algebra $A$ and an $A$-module $M$, the
\emph{one-point extension} algebra $A[M]$ is the matrix algebra
$\sm{A & M \\ 0 & \K}$.
This is relevant since any Borel-Schur algebra
$S^+(2, r)$ is a one-point extension algebra
	$A[M]$, where $A$ is isomorphic to  $S^+(2, r-1)$.
To see this, let $S:= S^+(2,r)$. Take the idempotent
$e = \xi_{(0,r)}$ of $S$, and let $\be = 1-e$.
From the relations for $S$ (see also~\cite[Lemma~6.2]{finite}), it is immediate that $\be S \be \cong
S^+(2,r-1)$.
As a left $S$-module $S$ can be decomposed into the direct sum
$S\be \oplus Se$. Note that, for any $(0,r) \neq \lambda \in \Lambda (2,r)$,  $\lambda $ dominates $(0,r)$. Therefore $eS\bar{e}=0$
and hence $S\be = \be S\be$, and
$Se = M\oplus \K e$, where $M={\rm rad} Se =\be Se$.
With this,  the product of two elements in $S$ has precisely
the form of the product in $A[M]$ with $A= \be S \be$.

\bigskip

The representation theory of a one-point extension algebra is often closely
related to  representations of a certain poset.
Given a poset $(\poset, \le)$, a $\poset$-space $(V,V_p)$ is a vector space $V$
together with subspaces $V_p$, $p \in \poset$, such that $p \le q$ implies $V_p \subset V_q$.
A  homomorphism between $\poset$-spaces
$(V,V_p)$ and $(W,W_p)$
is a linear map
$f\colon V \to W$ such that
 $f(V_p) \subset W_p$ for all $p\in \poset$.

Given a  natural number $k$, we also denote by $k$ the ordinal with $k$ elements.
Given posets $\poset_1$, \dots, $\poset_s$, we write $(\poset_1,\dots,\poset_s)$
for their disjoint union. We denote by $N$ the poset consisting of four
elements $t_1$, $t_2$ , $b_1$, $b_2$ with relations $t_1 < b_i$, $t_i < b_2$
for $i=1$, $2$. This poset can be visualized as follows
\begin{equation*}
\xymatrix{
t_1 \ar[d] \ar[dr] & t_2  \ar[d] \\ b_1 & b_2. 
}
\end{equation*}
Like for algebras, it is possible to define the representation type of a poset
$\poset$, with  the category of $\poset$-spaces taking the place of
the category of modules. 
It was proved by Nazarova in~\cite{nazarova} that every poset has either
finite, or tame, or wild representation type, and that these possibilities are
mutually exclusive. Moreover, she characterized the wild posets.

\begin{theorem}[{Nazarova,~\cite{nazarova}}]\label{nazarova}
The poset $\poset$ is of wild representation type if and only if $\poset$
contains as a full subposet one of the sets $(1,1,1,1,1)$, $(1,1,1,2)$,
$(2,2,3)$, $(1,3,4)$, $(1,2,6)$, or $(N,5)$.
\end{theorem}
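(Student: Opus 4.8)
The plan is to follow the standard route through the Tits quadratic form, combined with the differentiation algorithm for posets; throughout I may freely use the finite/tame/wild trichotomy for posets (with the three possibilities mutually exclusive) recorded just above. \emph{Direction 1: containing one of the six forces wildness.} First I would record a propagation lemma: if $\poset'$ is a full subposet of $\poset$, then the category of $\poset'$-spaces embeds fully and faithfully into the category of $\poset$-spaces via $(V,(V_p)_{p\in\poset'})\mapsto(V,(\widetilde V_q)_{q\in\poset})$ with $\widetilde V_q=\sum_{p\in\poset',\,p\le q}V_p$; one checks that $\widetilde V_p=V_p$ for $p\in\poset'$, so the functor is indeed full and faithful. (This is the poset counterpart of the extension-by-zero functor used for quivers in Section~2.) Hence it suffices to prove that each of $(1,1,1,1,1)$, $(1,1,1,2)$, $(2,2,3)$, $(1,3,4)$, $(1,2,6)$, $(N,5)$ is wild. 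For the five disjoint unions of chains this is classical: a $\poset$-space for a chain-union $(n_1,\dots,n_k)$ is a vector space carrying $k$ flags of lengths $n_1,\dots,n_k$, i.e.\ a representation of a star-shaped quiver (or species) with arms of the corresponding lengths; comparing the associated $T_{p,q,r}$-data with the Dynkin/Euclidean tables shows each of the five lies strictly beyond the Euclidean range and is therefore wild. The poset $(N,5)$, which is not a chain-union, is treated separately, e.g.\ by restricting along a subspace-type problem already known to be wild, or by exhibiting a $\K\langle u,v\rangle$-linear representation embedding directly.

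\emph{Direction 2: if none of the six occurs, $\poset$ is not wild.} Attach to $\poset$ its Tits form $q_\poset$ on $\Z^{\poset}\oplus\Z$, $q_\poset(z)=z_0^2+\sum_{p\in\poset}z_p^2+\sum_{p<q}z_pz_q-z_0\sum_{p\in\poset}z_p$. The argument has two ingredients. The first is a combinatorial lemma: $q_\poset$ is weakly non-negative (that is, $q_\poset(z)\ge 0$ for all $z\ge 0$) if and only if $\poset$ contains none of the six posets as a full subposet. For the ``only if'' direction one exhibits, for each of the six, an explicit positive vector on which its form is negative and pads it by zeros; for ``if'' one argues by contradiction, choosing $z\ge 0$ of minimal support with $q_\poset(z)<0$ and analysing the induced order on $\supp z$ by means of the classification of critical and hypercritical non-negative unit forms, concluding that $\supp z$ must carry one of the six posets. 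The second, and deep, ingredient is: a poset whose Tits form is weakly non-negative is not wild. This is proved by the differentiation algorithm of Nazarova and Roiter: one repeatedly applies the appropriate differentiations (with respect to a maximal element, a minimal element, a suitable pair, together with the ``completed'' differentiations), checking that (i) each differentiation alters the category of $\poset$-spaces only up to finitely many indecomposables, hence preserves the tame/wild distinction; (ii) each step strictly decreases the number of points or terminates at one of the base posets, namely the finite-type posets of Kleiner's list together with the primitive tame ones; and (iii) weak non-negativity of the Tits form is inherited at each step. Combining (i)--(iii), $\poset$ inherits non-wildness from the base poset it eventually reduces to, hence is of finite or tame type.

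\emph{Conclusion and main obstacle.} Direction~1 gives ``contains one of the six $\Rightarrow$ wild''; Direction~2, together with the trichotomy, gives ``contains none of the six $\Rightarrow$ finite or tame'', and the two statements are logically complementary, which is exactly the asserted equivalence. The genuine difficulty is concentrated in Direction~2: steps (i) and (iii) of the differentiation argument are where the heavy machinery of poset differentiation and of non-negative unit forms enters, and they rely on having the base cases (Kleiner's finite-type list and the primitive tame posets) already established; the minimal-support analysis in the combinatorial lemma is delicate but purely arithmetic.
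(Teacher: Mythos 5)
This theorem is quoted from Nazarova~\cite{nazarova} as a black box; the paper contains no proof of it, so there is no internal argument to compare yours against. What you have written is a sketch of the standard Kiev-school proof, and its overall architecture --- a full-subposet propagation lemma, direct verification that the six listed posets are wild, the equivalence between weak non-negativity of the Tits form and absence of the six critical posets, and the differentiation algorithm of Nazarova and Roiter to obtain non-wildness from weak non-negativity --- is the correct one. A few remarks on where the sketch is thinner than it looks. The claim that chain-union $\poset$-spaces ``are'' representations of star quivers $T_{p,q,r}$ is a convenient heuristic but not an equivalence of categories: a $\poset$-space is a subspace representation, and the passage to star species is an equivalence only after restricting or after discarding finitely many indecomposables; Nazarova's original argument for the wildness of the six posets instead exhibits explicit $\K\langle u,v\rangle$-linear representation embeddings, which is cleaner and works uniformly, including for $(N,5)$ --- you essentially concede this when you say $(N,5)$ must be treated ``separately.'' Likewise, items (i) and (iii) of your differentiation argument compress two whole papers' worth of verification (that each differentiation functor is a reduction up to finitely many modules, and that weak non-negativity of the Tits form descends along each step); that is fine in an outline provided the reader understands this is where the real content lies, which you do flag. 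The minimal-support analysis for the combinatorial lemma is the standard route through hypercritical unit forms, and is sound. In short: there is no gap in the logic of your outline, but you should present it explicitly as a roadmap for a theorem that is imported, not proved, in the present paper.
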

We call the six  posets listed in this theorem \emph{ Nazarova posets}.

For a  finite dimensional algebra $A$,  denote by $\Gamma_A$ the Auslander-Reiten quiver of $A$.
Now consider a finite dimensional $A$-module $M$, and the functor
$h_M:= {\rm Hom}_A(M,-)$.
Define $\Gamma_M$ to be the subquiver of $\Gamma_A$ whose vertices are
given by the indecomposable $A$-modules $N$ with $h_M(N) \not=0$, and arrows are given by
the irreducible morphisms $f$ in $\Gamma_A$ such that $h_M(f)\not=0$.

\begin{proposition}\label{nazarova2}
Let $A$ be a finite dimensional algebra of finite representation type and
$M$ an indecomposable $A$-module.
If $\dim h_M(N) \le 1$ for all indecomposable $A$-modules $N$,  then
$\Gamma_M$ is the Hasse diagram of a poset $\poset_M$. In this case,
 the representation type of
	$A[M]$ coincides with the representation type of
$\poset_M$.
\end{proposition}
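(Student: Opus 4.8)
The plan is to realize $A[M]\md$ as the category of $\poset_M$-spaces, matching objects and morphisms on both sides, and then invoke the fact that representation type is a Morita-type invariant of the categories involved. I would proceed in three stages: first, set up the standard description of $A[M]$-modules; second, use finite representation type of $A$ together with the dimension hypothesis $\dim h_M(N)\le 1$ to cut down the relevant data to a $\poset_M$-space; third, check that this correspondence is an equivalence of categories, or at least a representation equivalence, so that tame/wild/finite transfers.

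First I would recall that a module over the one-point extension $A[M]=\sm{A & M\\ 0 & \K}$ is the same as a triple $(X,V,\varphi)$ where $X$ is an $A$-module, $V$ a vector space (the action of the extra vertex), and $\varphi\colon V\to h_M(X)=\Hom_A(M,X)$ a linear map; morphisms are pairs $(f,g)$ with $f\colon X\to X'$ in $A\md$, $g\colon V\to V'$, compatible with the $\varphi$'s via $h_M(f)$. This is entirely standard and I would just state it. Now since $A$ has finite representation type, every $A$-module $X$ decomposes as a direct sum of indecomposables $N_1,\dots,N_k$ from the (finite) AR-quiver $\Gamma_A$, and $h_M(X)=\bigoplus_i h_M(N_i)$. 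By hypothesis each $h_M(N_i)$ is at most one-dimensional, and it is nonzero precisely when $N_i$ is a vertex of $\Gamma_M$. So the subspace $\im\varphi\subset h_M(X)$, together with its intersections with the ``coordinate'' subspaces indexed by the vertices of $\Gamma_M$, is exactly the data of a $\poset_M$-space: the partial order on $\poset_M$ is read off from $\Gamma_M$ being its Hasse diagram, and the irreducible morphisms with $h_M(f)\ne 0$ provide, upon composition, precisely the inclusions $V_p\subset V_q$ for $p\le q$. Here one uses that $\Gamma_M$ being a Hasse diagram means the composites of chains of such irreducible maps realize all the order relations and are all (up to scalar) the unique maps between the one-dimensional spaces $h_M(N_p)$.

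The core of the argument is then to promote this bookkeeping to an honest functor $A[M]\md\to \poset_M\text{-spaces}$ and show it is a representation equivalence: indecomposables go to indecomposables, isomorphism classes are reflected, and it is essentially surjective onto the indecomposable $\poset_M$-spaces that are not supported at a single point plus finitely many exceptions (the $A$-modules themselves, and the simple at the extra vertex, contribute only finitely many ``extra'' indecomposables, which does not affect the tame/wild/finite trichotomy). Concretely, the functor sends $(X,V,\varphi)$ to $\big(h_M(X),\, (\im(\varphi)\cap h_M(X)_p)_{p\in\poset_M}\big)$ after first replacing $X$ by a canonical representative using Krull--Schmidt; I would argue along the lines of Ringel's treatment in~\cite{Ri} that the ``redundant'' part of $X$ (the summands $N_i$ with $h_M(N_i)=0$, or more than is needed to carry $\im\varphi$) splits off and only adds finitely many indecomposables, so the essential content of $A[M]\md$ is captured by $\poset_M$-spaces. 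Once the functor is shown to preserve and reflect indecomposability and isomorphism, Proposition~\ref{quotient}-style reasoning (full and faithful / representation-embedding arguments as in Section~2) gives wildness of one from wildness of the other, and tameness transfers by the analogous statement about $1$-parameter families; finiteness is clear from finiteness of indecomposables on each side.

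The main obstacle, and the step I would spend the most care on, is the essential-surjectivity/representation-equivalence claim: verifying that an arbitrary $A[M]$-module can be reduced, up to a summand contributing only finitely many indecomposables, to one of the special form whose associated $\poset_M$-space is faithfully recovered — i.e. that no indecomposable $A[M]$-module is ``lost'' or ``duplicated'' in passing to the poset. This requires knowing that for $N$ with $h_M(N)$ one-dimensional, the irreducible maps into and out of $N$ act on $h_M$ in the way encoded by the Hasse diagram (no unexpected zero or non-scalar behaviour), which is exactly the hypothesis that $\Gamma_M$ \emph{is} a Hasse diagram; I would make this precise and cite the poset-representation machinery of~\cite{Ri,nazarova} for the final trichotomy transfer.
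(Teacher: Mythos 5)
The paper does not actually prove Proposition~\ref{nazarova2}: immediately after the statement the authors write ``For a proof we refer to Section~2 of~\cite{Ri}, see also Section~2 of~\cite{ringel1099}'' and only record how the underlying set and partial order of $\poset_M$ are read off from $\Gamma_M$. So there is no paper proof against which to match you. What you have sketched is a blind reconstruction of Ringel's one-point-extension/vector-space-category machinery, and your high-level outline --- regard $A[M]$-modules as triples $(X,V,\varphi)$ with $\varphi\colon V\to h_M(X)$, use finite representation type and $\dim h_M(N)\le 1$ to reduce this data to a subspace problem governed by $\poset_M$, and transfer representation type through a representation equivalence with finitely many exceptional indecomposables --- is indeed the right skeleton.

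There are, however, two genuine gaps. First, you never establish the opening clause of the conclusion, that $\Gamma_M$ \emph{is} the Hasse diagram of a poset; at one point you even call this ``the hypothesis that $\Gamma_M$ is a Hasse diagram.'' It is not a hypothesis. One must show that $\Gamma_M$ has no directed cycle and no shortcut arrow $p\to q$ running in parallel with a longer $\Gamma_M$-path from $p$ to $q$. Acyclicity does follow, but by an argument you should actually give: a directed cycle in $\Gamma_M$ composes to a non-invertible (hence nilpotent) endomorphism $f$ of an indecomposable $N$, yet each arrow in $\Gamma_M$ induces a nonzero map between one-dimensional spaces, so $h_M(f)$ would be a nonzero scalar on $h_M(N)$; since $h_M$ is a functor, $h_M(f)$ is also nilpotent, a contradiction. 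The absence of shortcuts is a further claim about how $h_M$ treats $\rad^2$ versus $\rad\setminus\rad^2$ of $\Hom$-spaces and needs its own justification; nothing in your proposal supplies it.

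Second, your explicit formula for the functor is off. You send $(X,V,\varphi)$ to $\bigl(h_M(X),\ (\im\varphi\cap h_M(X)_p)_p\bigr)$ with $h_M(X)$ as the total space. With the paper's definition of a $\poset_M$-space, the total space should be $\im\varphi$ (equivalently $V$, after splitting off the kernel of $\varphi$, which contributes only the simple at the extension vertex); writing $X=\bigoplus_i N_i$ with projections $\pi_i\colon h_M(X)\to h_M(N_i)$, the subspace $U_p$ should be $\{u\in\im\varphi : \pi_i(u)=0\text{ whenever } [N_i]\not\le p\}$, not an intersection with a coordinate subspace indexed by a single vertex $p$. As stated, your assignment neither reflects isomorphism nor gets the dimension bookkeeping right (adding to $X$ a summand $N$ with $h_M(N)=0$ changes nothing on the $A[M]$-side but leaves $h_M(X)$ unchanged too, so your total space does not track the extension-vertex dimension, which is the invariant the poset representation must encode). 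The reduction step and the ``finitely many exceptions'' you flag are exactly where this precision matters; as written the argument does not close.
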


For a proof we refer to Section~2 of~\cite{Ri}, see also
Section~2 of ~\cite{ringel1099},  which discusses this in more detail.
The underlying set of  $\poset_M$ consists of  the isomorphism classes $[U]$
of indecomposable $A$-modules $U$ such that $h_M(U)\neq 0$. The partial order
is defined by
$[U]\leq [V]$
provided there is an irreducible
map $f: U\to V$ such that $h_M(f)$ is non-zero.
With our assumptions, for $\Gamma_A$ without multiple arrows,
$h_M(f)\neq 0$ if and only if
$h_M$ induces an injective map $h_M(U)\to h_M(V)$.
To minimize the number of symbols we write $U$ instead of $[U]$ for an element
of the poset.

We apply these results to
$S=S^+(2, 5)$ for $\K$ of characteristic 3.
We have seen that this is the one-point extension $A[M]$ with $A=S^+(2, 4)$
and where $M$ is the radical of $Se$, as described above.
We have proved in \cite{finite}
that $A=S^+(2, 4)$ has finite representation type,
and computed its Auslander-Reiten  quiver.
We reproduce it in Figure~\ref{ARQ}.
{\begin{figure}
	\includegraphics[trim=4cm 4cm 6cm 4cm]{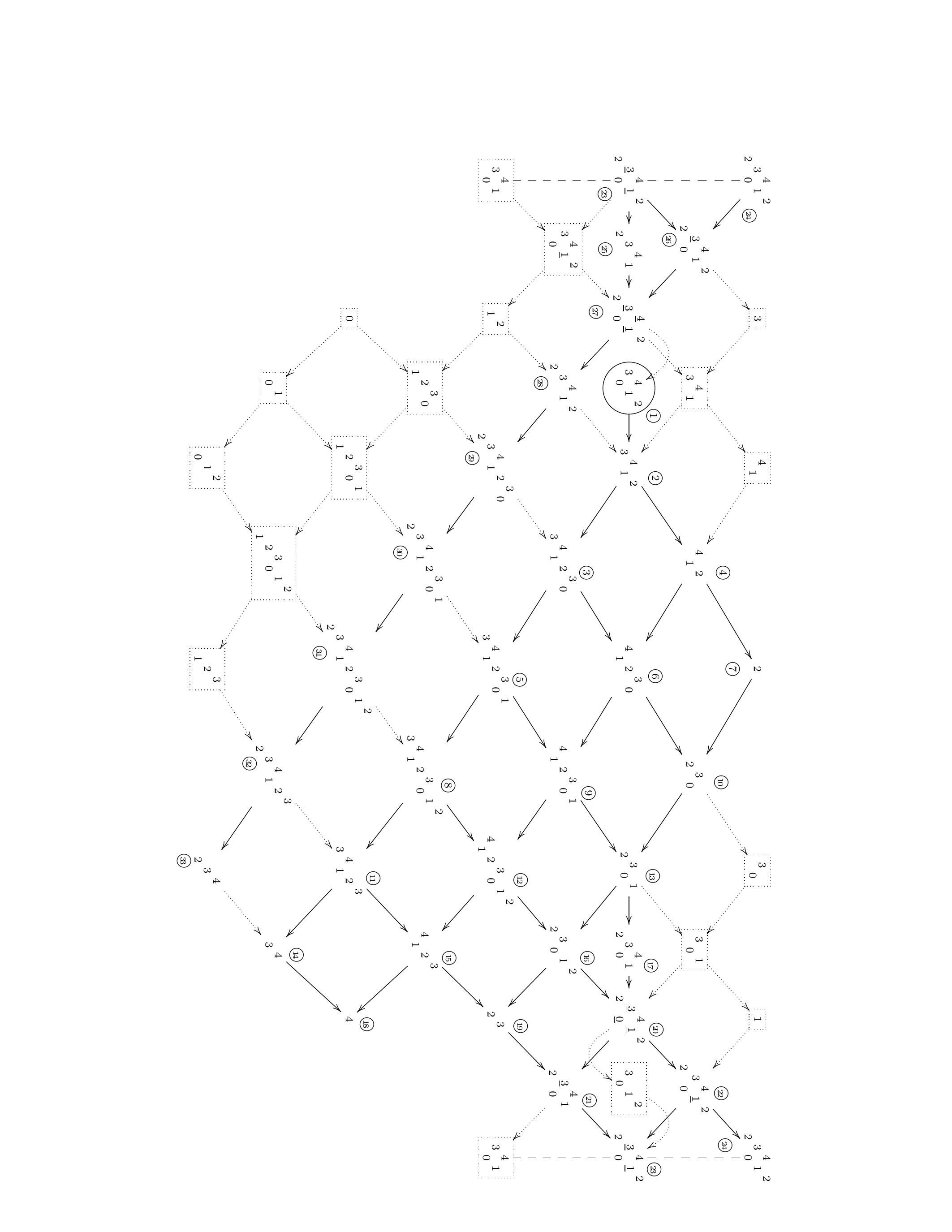}\caption{AR-quiver for $S^+(2,4)$,~$p=3$.\quad\quad\quad\quad\quad\quad\quad\quad\quad\quad\quad\quad\quad\quad\quad\quad\quad\quad\quad\quad\quad\quad\quad\quad\quad\quad\quad}
	\label{ARQ}
\end{figure}
}
It is $90^\circ$ clockwise rotated for typographical reasons.
 The module $M$ is framed with a circle.
We also labelled the modules in $\Gamma_M$ with encircled numbers for future
reference, and we will write $M_t$ for the module in $\Gamma_M$ labelled with $t$. 
In  Subsection~\ref{CompGamma} we will  prove the following:
\begin{itemize}
\item {\it The modules not in
$\Gamma_M$ are framed with dots.}
\item {\it  The arrows in $\Gamma_M$ are solid and all other arrows are dotted.}
\end{itemize}
The right side of the quiver is glued to the left side along the dashed line.
The resulting poset  is redrawn in Figure~\ref{poset}.
\begin{figure}
\begin{gather}
\xymatrix@R1.5ex@C1.5ex{
1 \ar[d] \\ 
2 \ar[d] \ar[rd] \\
3 \ar[d] \ar[rd] & 4 \ar[d] \ar[rd]  \\
5 \ar[d] \ar[rd] & 6 \ar[d] \ar[rd] & 7 \ar[d] \\
8 \ar[d] \ar[rd] & 9 \ar[d] \ar[rd] & 10 \ar[d]  \\
11 \ar[d] \ar[rd] & 12 \ar[d] \ar[rd] & 13 \ar[d] \ar[rd] \\
14  \ar[rd] & 15 \ar[d] \ar[rd] & 16 \ar[d] \ar[rd] & 17 \ar[d] \\
& 18 & 19 \ar[rd] & 20 \ar[d] \ar[rd] \\
&&& 21 \ar[rd] & 22 \ar[d] \ar[rd] \\
&&&& 23 \ar[d] \ar[rd] & 24 \ar[d] \\
&&&& 25 \ar[rd] & 26 \ar[d] \\
&&&&& 27 \ar@{.}[dd] \\\\
&&&&& 33
}
\end{gather}
\caption{Poset $\poset_M$}
\label{poset}
\end{figure}
Later we will show that this poset has tame representation type.

Now we explain the notation we use for modules in Figure~\ref{ARQ} as much as  we need.
Once more we will  write $\alpha$ for $\alpha_0$, $\beta$ for $\alpha_1$, and $\lambda_2$ for $(\lambda_1, \lambda_2) \in \Lambda(2,4)$.
As proved in \cite{finite},
the quiver $Q$ of $A$ is of  the form
\begin{equation*}
 	\xygraph{
	!{<0cm,0cm>;<1.2cm,0cm>:<0cm,1.2cm>::}
	[rrrr]:_{\alpha}_(-0.2){4.}_(1)3[l]:^{\alpha}^(1)2[l]:_{\alpha}_(1)1[l]:^{\alpha}_(1.2)0[l]
	:@{<-}@/^4ex/[rrr]^{\beta}[ll]:@{<-}@/_4ex/[rrr]_{\beta}}
\end{equation*}
and  $A$ is defined by the relations
$\alpha^3=0$, $( \beta\alpha)_4 = (\alpha\beta)_4$.  
Consider the factor algebra
$\bar{A}:= A/ ( \beta\alpha)_4$.
Note that  the $\bar{A}$-modules
are precisely those $A$-modules on which $( \beta\alpha)_4$ (hence $(\alpha\beta)_4$)
acts as zero.
The algebra $\bar{A}$
 is a special biserial algebra, of finite type. Thus indecomposable 
$\bar{A}$-modules (and the corresponding $A$-modules) are string modules and can be described
by  quivers as in~\cite{ringel_dihedral} or in~\cite[Chapter~II]{lnm1428}. 
Consider for example $\s{ &&& 3 \\ 4 & &2 && 0 \\ & 1}$.
This stands for the 5-dimensional module with basis  $\left\{v_1, \dots, v_5\right\}$. The
 canonical idempotents of $A$ act as $\xi_{(4-t,t)} v_t=v_t$, $t=1, \dots, 4$,  the arrows act as follows
\begin{equation*}
\begin{aligned}
\beta_4 v_4= v_1, \quad \alpha_2 v_2=v_1, \quad \alpha_3 v_3=v_2,\quad \beta_3 v_3=v_0,
\end{aligned}
\end{equation*}
and anything else acts as zero.
This module has minimal generators $v_4, v_3$, and its largest semisimple
submodule is spanned by $v_1$ and $v_0$.

 The above string modules  account for  most vertices
in Figure 1.
We do not need explicit descriptions of the other modules, 
except the module $\s{& 4 \\ 3 && 1 \\ &0}$. 
It has a basis $\left\{\, v_i \,\middle|\, 0\le i\le 4 \right\}$, the canonical
idempotents of $A$ act again as $\xi_{(4-t,t)} v_t = v_t$, the arrows act as
follows
\begin{equation*}
\begin{aligned}
\beta_4 v_4 = v_1,\quad \alpha_4 v_4 = v_3,\quad \alpha_1 v_1 = v_0,\quad
\beta_3 v_3 = v_0,
\end{aligned}
\end{equation*}
and anything else acts as zero. 

\subsection{Computing $\Gamma_{M}$}\label{CompGamma}

To compute $\Gamma_{M}$ we use the following reduction that simplifies calculations.
Let $L_0$ denote the (simple) socle of $M$ and write $\bar{M} = M/L_0$. Notice
that $\bar{M} \cong \mdl{2}$. We claim
that the canonical projection  $\pi \colon M \twoheadrightarrow \bar{M}$ induces an isomorphism 
${\Hom_A (\bar{M}, X) \cong \Hom_A(M,X)}$, 
for every indecomposable $A$-module $X$ which is not isomorphic to $M$. 
Since $M \to \bar{M}$ is an epimorphism, the map 
\begin{equation*}
\begin{aligned}
\Hom_A (\bar{M}, X) & \to \Hom_A(M,X)\\
f &\mapsto f \circ \pi
\end{aligned}
\end{equation*}
is an inclusion, and its image can be identified with those $\theta \colon M
\to X$ such that $\theta(L_0) =0$. 
Suppose this map is not surjective. Then there is 
  $\theta\colon M\to X$ such that $\theta(L_0)\not= 0$. As $L_0$ is the socle of
$M$, the map $\theta$
is injective, and it splits, since
$M$ is injective as an $A$-module. As we assumed that $X$ is indecomposable, we must have
$X\cong M$, which contradicts our assumption that $X \not\cong M$. 

 On the part of $\Gamma_A$ with $\bar{A}$-modules one can compute
$h:=\Hom_A(\bar{M}, -)$ 
using the string module presentation (recall that $\bar{A}=A/ ( \beta\alpha)_4$).
We deal now with the part involving non-string modules. 
For convenience, we
draw the relevant part of $\Gamma_A$ (recall that the labelling refers to
Figure~\ref{ARQ}). It is also convenient to include several 
string modules, in particular, $\mdl{13}$, $\mdl{16}$, $\mdl{19}$, and
$\mdl{28}$.  
\begin{equation*}
\begin{gathered}
\xymatrix{
   &&
      *+[F.]{1} \ar@{..>}[rd]
   &&
      \mdl{24} \ar[rd]
   &&
      *+[F.]{3} \ar@{..>}[rd]
\\  
   &
      *+[F.]{\s{3&&1 \\ &  0}}   \ar@{..>}[ru]\ar@{..>}[rd] 
   && 
      \mdl{22}\ar[ru]\ar[rd] 
   &&
      \mdl{26}\ar@{..>}[ru]\ar[rd] 
   &&
      *+[F.]{\s{& 4 \\3&&1}} 
\\
      \mdl{13}\ar@{..>}[ru]\ar[rd] \ar[r]
   & 
      \mdl{17} \ar[r] 
   & 
      \mdl{20} \ar[ru]\ar[rd] \ar@{..>}[r]
   & 
      *+[F.]{\s{&&&2 \\ 3 && 1 \\ &  0}}  \ar@{..>}[r] 
   & 
      \mdl{23}\ar[ru]\ar@{..>}[rd] \ar[r]
   & 
      \mdl{25}  \ar[r]
   & 
      \mdl{27} \ar@{..>}[ru]\ar[rd] \ar@{..>}[r]
   & 
      M 
\\
   & 
      \mdl{16} \ar[ru]\ar[rd]
   && 
      \mdl{21}\ar[ru]\ar@{..>}[rd]
   && 
      *+[F.]{\s{& 4&& 2 \\ 3 && \td{1} \\ & 0 }} \ar@{..>}[ru] 
      \ar@{..>}[rd]
   &&
      \mdl{28}
\\
   &&
      \mdl{19} \ar[ru]
   &&
      *+[F.]{\s{&4 \\ 3 && 1 \\ & 0}} \ar@{..>}[ru]
    &&
      *+[F.]{\s{&2 \\  1}}  \ar@{..>}[ru]
}
\end{gathered}
\end{equation*}
\begin{enumerate}
\item We exploit Auslander-Reiten sequences which have terms $U$,  where
$U$ is a string module and $h(U)=0$.
In each case, isomorphisms are given by $h(f)$ for a relevant irreducible
map, using the fact  that $h$ is a functor.

We start with the  path from ${\module{20}}$ via ${\module{24}}$ to ${\module{28}}$.
Applying $h$ to the Auslander-Reiten sequence ending in ${\module{22}}$ gives
$h({\module{20}})\cong h({\module{22}})$, and applying it to the Auslander-Reiten sequence ending in ${\module{24}}$ shows that
$h({\module{22}}) \cong h({\module{24}})$.
The same type of argument shows  that
$$h({\module{24}})\cong h({\module{26}}) \cong h({\module{27}}).
$$
We claim that $h({\module{27}}) \cong h({\module{28}})$. 
Consider the Auslander-Reiten sequence $0\to U\to Z\to V\to 0$ ending in the
module $V = \s{ &2\\ 1}$.
 Note that $Z\cong \tau({\module{28}})$.
Then $h(U)=0$, as the (simple) socle of $U$ is not a composition factor of $\bar{M}$. As well, $h(V)=0$, and hence $h(Z)=0$. This implies that $h({\module{27}}) \cong h({\module{28}})$, by applying  $h$ to the Auslander-Reiten sequence ending in ${\module{28}}$.
Using the fact that {\module{28}}  is a string module, we check that $h({\module{28}})\cong \mathbb{K}$.
\item Considering the Auslander-Reiten sequence starting with $\mdl{19}= \s{& 3
\\ 2}$  gives $\mathbb{K}\cong h(\mdl{19})\cong h({\module{21}})$.
The Auslander-Reiten sequence starting with ${\module{21}}$ gives $h({\module{21}}) \cong h({\module{23}})$,
which therefore also is 1-dimensional.
\item The Auslander-Reiten sequence ending with ${\module{25}}$  has middle
term ${\module{23}}$. Since $h$ takes the end term to zero we get
$h({\module{23}})\cong h({\module{25}})$.
Similarly the Auslander-Reiten sequence starting with $\mdl{17}$ has indecomposable
middle term ${\module{20}}$, and $h$ maps the end term to zero. Hence
$h(\mdl{17})\cong
h({\module{20}}) \cong \K$.

\item Considering the Auslander-Reiten sequence ending in $\bar{M}$, we
conclude that $h(M)\cong h(\bar{M}) \cong \K$. 
\end{enumerate}

In total, for each non-string module $W$ which is not framed,  we have that
$h(W)$ is one-dimensional
and most of the irreducible maps are taken to isomorphisms.

The remaining irreducible maps can be dealt with, either they are
injective between modules which are taken to $\mathbb{K}$ by $h$, or one can
use the mesh relations to deduce that $h$ takes them to isomorphisms.

\subsection{$\Gamma_M$ is tame}

From now, we only work with the poset $\Gamma_M$, and we use  the labels for
elements as shown in Figure~\ref{poset}.
Note that, since $S^+(2, 5)$ has infinite representation type
by~Theorem~\ref{nossa}, we know by Proposition 
~\ref{nazarova2} that the poset $\Gamma_M$ is not of finite type. 

Now we prove that $\Gamma_M$ is not wild. By  Theorem~\ref{nazarova}, we must show that
it does not contain a Nazarova subposet.
We can see directly that $\Gamma_M$ does not contain
five incomparable points, that is, $(1, 1, 1, 1, 1)$
does not occur.
To exclude the other five Nazarova subposets we use some reductions.

We describe first the strategy.
More generally, let $\cP$ be any poset.  
We wish to show that some disjoint unions $(Y, Z)$, with $Z$ of width
no less than $2$, are not  full subposets of $\cP$.

For any subposet $W$ of $\cP$, we write
\begin{equation*}
\begin{aligned}
C_W:= \{ s\in \cP \mid s \mbox{\ is not comparable with any } w\in W\}.
\end{aligned}
\end{equation*}
If $(Y,Z)$ is a full subposet of $\cP$,
then $Z$ is contained in $C_Y$.
Denote by $U$  the convex hull of $Y$, then $C_Y= C_U$. So we get that $Z\subset C_U$.
Moreover, if $X$ is any subposet of $U$ then $C_U\subseteq C_X$ and therefore
 $C_X$ should contain the subposet $Z$. Thus if $Z$ is of width no less than
$2$, we get that the same property holds for $C_X$. This suggests to have a list of  \emph{test subsets}  $X$ with $C_X$ of width no
less than $2$.

So let $\cP=\Gamma_M$. We will determine all subposets $X$ of size 3 of the form $X= \{ x, y, z\}$ with $x<y<z$ where $x,y,z$ are neighbours with respect to $<$, and we will
use this list for the strategy as above.
We refer to these as minimal triples.

Take such $X$.
Then, referring to the diagram,  $X$ is either ``vertical",  or $X$ is ``diagonal", or 
$X$ is at the edge, by this we mean  one of the posets
$$\{ 10, 13, 17\}, \ \ \{ 17, 20, 22\}, \  \{ 21, 23, 25\},
$$
or $X$ is not convex and its convex hull is a set $V= X\cup \{ w\}$ with $x<w<z$. 
We list now all such  posets $X$ and the corresponding $C_X$ for which  $C_X$
has width no less than  $2$. 

(1) There are six such $X$ which are vertical:
\begin{equation*}
\begin{gathered}
\xymatrix@R2ex@C1.5em{
  5 \ar[d] && 6 \ar[d] && 7 \ar[d] && 8 \ar[d] && 9 \ar[d] && 10 \ar[d] \\ 
  8 \ar[d] && 9 \ar[d] && 10 \ar[d] && 11 \ar[d] && 12 \ar[d] && 13 \ar[d] \\ 
 11  && 12  && 13  && 14  && 15  && 16  
}
\end{gathered}\end{equation*}
The corresponding subsets $C_X$ are 
\begin{equation*}
\begin{gathered}
\xymatrix@R2ex@C1em{
4 \ar[rd] \ar[d] 
\\ 
6 \ar[rd] & 7   \ar[d] 
\\ 
& 10
}
\end{gathered},\quad
\begin{gathered}
\xymatrix@R1ex@C0.5em{
 & 11 \ar[dd] \\ 7 \\ & 14
}
\end{gathered},\quad
\begin{gathered}
\xymatrix@R1.6ex@C1em{
8 \ar[rd] \ar[d] \\
11 \ar[d] \ar[rd] & 12 \ar[d] \\
14 \ar[rd] & 15 \ar[d] \\
& 18 
}
\end{gathered}
,\quad
\begin{gathered}
\xymatrix@R1.6ex@C1em{
4 \ar[rd] \ar[d] \\
6 \ar[d] \ar[rd] & 7 \ar[d] \\
9 \ar[rd] & 10 \ar[d] \\
& 13  \ar[rd]\\ 
&& 17
}
\end{gathered},\quad
\begin{gathered}
\xymatrix@R1ex@C0.5em{
 & 7 \ar[dd] \\ 14 \\ & 10
}
\end{gathered}
,\quad
\begin{gathered}
\xymatrix@R2ex@C1em{
11 \ar[rd] \ar[d] 
\\ 
14 \ar[rd] & 15   \ar[d] 
\\ 
& 18 
}
\end{gathered}
\end{equation*}
(2) There are four such $X$ which are diagonal:
\begin{equation*}
\xymatrix@R2ex@C1em{
15 \ar[rd] &&& 16 \ar[rd]&&& 20 \ar[rd]&&& 19 \ar[rd]\\
& 19 \ar[rd]&&& 20 \ar[rd]&&& 22 \ar[rd]&&& 21 \ar[rd]\\ 
&& 21 &&& 22 &&& 24 &&& 23 
}
\end{equation*}
The corresponding subsets $C_X$ are 
\begin{equation*}
\begin{gathered}
\xymatrix@R1ex@C0.5em{
 & 22 \ar[dd] \\ 14 \\ & 24 
}
\end{gathered}
,\quad \quad\quad
\begin{gathered}
\xymatrix@R2ex@C1em{
11 \ar[rd] \ar[d] 
\\ 
14 \ar[rd] & 15   \ar[d] 
\\ 
& 18
}
\end{gathered}
,\quad\quad\quad
\begin{gathered}
\xymatrix@R2ex@C1em{
11 \ar[rd] \ar[d] 
\\ 
14 \ar[rd] & 15   \ar[d]  \ar[rd] 
\\ 
& 18 & 19 
}
\end{gathered}
,\quad\quad
\begin{gathered}
\xymatrix@R1ex@C0.5em{
 & 14 \ar[dd] \\ 24 \\ & 18 
}
\end{gathered}
\end{equation*}
(3) There are three posets $X$ at the edge of $\poset$ with $C_X$ of width $2$:
\begin{equation*}
\xymatrix@R2ex@C1em{
10 \ar[d] &&&& 17 \ar[d] &&&& 21 \ar[dr] \\
13 \ar[rd] &&&& 20 \ar[rd] &&&&& 23 \ar[d] \\
& 17 &&&& 22 &&&& 25
}
\end{equation*} 
The corresponding subsets $C_X$ are
\begin{equation*}
\begin{gathered}
\xymatrix@R2ex@C1em{
8 \ar[rd] \ar[d] \\
11 \ar[rd] \ar[d] & 12 \ar[d]
\\ 
14 \ar[rd] & 15   \ar[d] 
\\ 
& 18
}
\end{gathered}
,\quad\quad
\begin{gathered}
\xymatrix@R2ex@C1em{
11 \ar[rd] \ar[d] 
\\ 
14 \ar[rd] & 15   \ar[d]  \ar[rd] 
\\ 
& 18 & 19 
}
\end{gathered}
,\quad\quad
\begin{gathered}
\xymatrix@R1ex@C0.5em{
 & 14 \ar[dd] \\ 24 \\ & 18 
}
\end{gathered}
\end{equation*}
(4) There are  three sets $V_t= X_t\cup \{w_t\}$, for $t=1, 2, 3$,  for which
$C_{X_t} (=C_{V_t})$ has width no less than $2$:

\begin{equation*}
\begin{aligned}
V_1 = 
\begin{gathered}
\xymatrix@R2ex@C1em{
11 \ar[rd] \ar[d] 
\\ 
14 \ar[rd] & 15   \ar[d] 
\\ 
& 18
}
\end{gathered}
,\quad
 V_2 =
\begin{gathered}
\xymatrix@R2ex@C1em{
13 \ar[rd] \ar[d] 
\\ 
16 \ar[rd] & 17   \ar[d] 
\\ 
& 20 
}
\end{gathered}
,\quad
  V_3 =
\begin{gathered}
\xymatrix@R2ex@C1em{
12 \ar[rd] \ar[d] 
\\ 
15 \ar[rd] & 16   \ar[d] 
\\ 
& 19 
}
\end{gathered}
\end{aligned}
\end{equation*}
For these, the corresponding subsets $C_{V_t}$ are, respectively, 
\begin{equation*}
\begin{gathered}
\xymatrix@R2ex@C1em{
7 \ar[d]  \\
10 \ar[d] \\
13 \ar[rd]\ar[d] \\ 
16 \ar[rd] & 17 \ar[d] \\
& 20 \ar[rd] \\ && 22 \ar[rd] \\ &&& 24
}
\end{gathered}
,\quad 
\begin{gathered}
\xymatrix@R2ex@C1em{
11 \ar[rd] \ar[d] 
\\ 
14 \ar[rd] & 15   \ar[d] 
\\ 
& 18
}
\end{gathered}
,\quad \quad\quad
\begin{gathered}
\xymatrix@R2ex@C1em{
14  & 17 
}
\end{gathered}. 
\end{equation*}

\noindent With this preparation, we can exclude Nazarova posets.
\bigskip

\noindent (I) We claim that $\cP$ does not have a subposet isomorphic to $(1, 3, 4)$. Assume for a contradiction there is a subposet $(Y, Z)$
where $Y\cong (3)$ and $Z\cong (1, 4)$. 
Let $U$ be the convex hull of $Y$, so $C_U = C_Y$ and $Z\subseteq C_U$. 
Then $U$ contains a minimal triple $X$  and then $C_U\subseteq C_X$. Therefore   $C_X$ contains a subposet isomorphic to $(1, 4)$. 
The only sets $X$ in our list  such that $C_X$ contains a subposet $(4)$   are
$X=\{ 8, 11, 14\}$, $X=\{ 7, 10, 13\}$,  $X=\{ 10, 13, 17\}$,   
and $X\subset  V_1$, but then  $C_X$ does not contain a subposet
$(1, 4)$, a contradiction.

\bigskip

\noindent (II)  We claim that $\cP$ does not have a subposet isomorphic to $(2, 2, 3)$. Suppose there is a subposet $(Y, Z)$ with $Y\cong (3)$ and $Z \cong (2, 2)$.
Let $U$ be the convex hull of $Y$, so $C_U = C_Y$ and $Z\subseteq C_U$. 
Then $U$ contains a minimal triple $X$  and then $C_U\subseteq C_X$ and $Z\subseteq C_X$ so that $C_X$ contains a subposet isomorphic to $(2,2)$. 
Our list does not contain such a minimal triple, a contradiction.

\bigskip

\noindent (III) We claim that $\cP$ does not have a subposet  isomorphic to $(N, 5)$. 
Assume for a contradiction that there is a subposet $(Y, Z)$ with $Y$ isomorphic to $(5)$ and $Z$ isomorphic to $N$.
Let $U$ be the convex hull of $Y$, so that $C_U=C_Y$. Then  $Y$  (and $U$) contains at least three
different minimal triples $X$, and $C_U\subseteq C_X$. Each of these $C_X$ must contain
the same copy of $N$ as a subposet.
From our list, the only subposets isomorphic to $N$ which occur as subsets of more than one of such $C_X$ are 
\begin{equation*}
N_1:= 
\begin{gathered}
\xymatrix@R2ex@C1em{
14 \ar[rd] & 15 \ar[d] \ar[rd] \\
& 18 & 19
}
\quad\quad\quad
\end{gathered}
N_2:= 
\begin{gathered}
\xymatrix@R2ex@C1em{
11 \ar[d]\ar[rd] & 12 \ar[d] \\
 14 & 15.
}
\end{gathered}
\end{equation*}
We see $C_{N_1}= \{ 17, 20, 22, 24\}$  and 
$C_{N_2} = \{ 7, 10, 13, 17\}$, which are too small to contain the subposet $(5)$, and we have a contradiction.

\bigskip

\noindent (IV)  We claim that $\cP$ does not contain a subposet isomorphic to $(1, 2, 6)$. Suppose we have a subposet $(Y, Z)$ with $Y\cong  (6)$ and $Z\cong (1, 2)$.
Let $U$ be the convex hull of $Y$, so that $C_Y=C_U$.  Assume first that $U$ has
a subposet $V$ of size four which is the union of two minimal triples $x <y <z$
and $x < w < z$.
Then
$Z\subseteq C_U\subseteq C_V$. Now,  from part (4) of the list, $C_V$ has width $\geq 2$  for $V_1, V_2$ and $V_3$ but in each case
$C_V$ does not contain $(1, 2)$, a contradiction.
This shows that $U$ (and $Y$) does not contain such a $V$. This implies that $Y$ is either ``vertical" or ``diagonal" (using that $Y$ has size $6$). If $Y$ is vertical
then it can only be $\{ 4 < 6 < \ldots < 18\}$  but then $C_Y=\emptyset$. If $Y$ is diagonal then its smallest element is $8$ or $11$, and then $C_Y=\emptyset$ as well.
This shows that no subposet isomorphic to $(1, 2, 6)$ exists.

\bigskip

\noindent (V)  We claim that $\cP$ does not contain a subposet $Y$ isomorphic to $(1, 1, 1, 2)$.
If so then this contains the unique subposet of $\cP$ isomorphic to $(1, 1, 1, 1)$, which is $Z= \{ 14, 15, 16, 17\}$.
Then $Y$ is the union of $Z$ together with precisely one
element $s$ not in this set. In each case there
are two distinct elements of $Z$ comparable with $s$,
so that $Y$ is not the disjoint
union of $(2)$ with $(1, 1, 1)$, a contradiction.
$\Box$
\bigskip

We proved that $\Gamma_M$ does not contain any Nazarova subposet.  So, by  Theorem~\ref{nazarova}, we have that $\Gamma_M$ is not wild.
Since, by Proposition~\ref{nazarova2} and Theorem~\ref{nossa}, we know that it is not of finite type, we conclude that $\Gamma_M$ is tame.

\section*{Acknowledgement}
This work was partially
supported by the Centre for Mathematics of the University of Coimbra --
UID/MAT/00324/2019, funded by the Portuguese Government through FCT/MEC and
co-funded by the European Regional Development Fund through the Partnership
Agreement PT2020.
This research was also supported by the program 'Research in Pairs' by
the Mathematical Forschungsinstitut Oberwolfach in 2016.
The third author's position is financed by FCT via CEECIND/04092/2017. 

\bibliographystyle{abbrv}

\bibliography{oberwolfach}

\begin{thebibliography}{10}

\bibitem{bautista}
R.~Bautista, L.~Salmer\'{o}n, and R.~Zuazua.
\newblock {\em Differential tensor algebras and their module categories},
  volume 362 of {\em London Mathematical Society Lecture Note Series}.
\newblock Cambridge University Press, Cambridge, 2009.

\bibitem{BH}
R.~Boltje and R.~Hartmann.
\newblock Permutation resolutions for {S}pecht modules.
\newblock {\em J. Algebraic Combin.}, 34(1):141--162, 2011.

\bibitem{DEMN}
S.~R. Doty, K.~Erdmann, S.~Martin, and D.~K. Nakano.
\newblock Representation type of {S}chur algebras.
\newblock {\em Math. Z.}, 232(1):137--182, 1999.

\bibitem{drozd}
Y.~A. Drozd.
\newblock Tame and wild matrix problems.
\newblock In {\em Representations and quadratic forms ({R}ussian)}, pages
  39--74, 154. Akad. Nauk Ukrain. SSR, Inst. Mat., Kiev, 1979.

\bibitem{lnm1428}
K.~Erdmann.
\newblock {\em Blocks of tame representation type and related algebras}, volume
  1428 of {\em Lecture Notes in Mathematics}.
\newblock Springer-Verlag, Berlin, 1990.

\bibitem{karin_book}
K.~Erdmann and T.~Holm.
\newblock {\em Algebras and representation theory}.
\newblock Springer Undergraduate Mathematics Series. Springer, Cham, 2018.

\bibitem{EN}
K.~Erdmann and D.~K. Nakano.
\newblock Representation type of {$q$}-{S}chur algebras.
\newblock {\em Trans. Amer. Math. Soc.}, 353(12):4729--4756, 2001.

\bibitem{finite}
K.~Erdmann, A.~P. Santana, and I.~Yudin.
\newblock On {A}uslander-{R}eiten sequences for {B}orel-{S}chur algebras.
\newblock {\em J. Algebra Appl.}, 17(2):1850028, 28, 2018.

\bibitem{gabriel_Dynkin}
P.~Gabriel.
\newblock Unzerlegbare {D}arstellungen. {I}.
\newblock {\em Manuscripta Math.}, 6:71--103; correction, ibid. 6 (1972), 309,
  1972.

\bibitem{gabrieldegen}
P.~Gabriel.
\newblock Finite representation type is open.
\newblock In {\em Proceedings of the {I}nternational {C}onference on
  {R}epresentations of {A}lgebras ({C}arleton {U}niv., {O}ttawa, {O}nt., 1974),
  {P}aper {N}o. 10}, pages 23 pp. Carleton Math. Lecture Notes, No. 9, 1974.

\bibitem{gabrielLNM}
P.~Gabriel.
\newblock The universal cover of a representation-finite algebra.
\newblock In {\em Representations of algebras ({P}uebla, 1980)}, volume 903 of
  {\em Lecture Notes in Math.}, pages 68--105. Springer, Berlin-New York, 1981.

\bibitem{geiss}
C.~Geiss.
\newblock On degenerations of tame and wild algebras.
\newblock {\em Arch. Math. (Basel)}, 64(1):11--16, 1995.

\bibitem{green2}
J.~A. Green.
\newblock On certain subalgebras of the {S}chur algebra.
\newblock {\em J. Algebra}, 131(1):265--280, 1990.

\bibitem{green}
J.~A. Green.
\newblock {\em Polynomial representations of {${\rm GL}_{n}$}}, volume 830 of
  {\em Lecture Notes in Mathematics}.
\newblock Springer, Berlin, augmented edition, 2007.
\newblock With an appendix on Schensted correspondence and Littelmann paths by
  K. Erdmann, Green and M. Schocker.

\bibitem{nazarova}
L.~A. Nazarova.
\newblock Partially ordered sets of infinite type.
\newblock {\em Izv. Akad. Nauk SSSR Ser. Mat.}, 39(5):963--991, 1219, 1975.

\bibitem{ringel_dihedral}
C.~M. Ringel.
\newblock The indecomposable representations of the dihedral {$2$}-groups.
\newblock {\em Math. Ann.}, 214:19--34, 1975.

\bibitem{Ri}
C.~M. Ringel.
\newblock On algorithms for solving vector space problems. {II}. {T}ame
  algebras.
\newblock In {\em Representation theory, {I} ({P}roc. {W}orkshop, {C}arleton
  {U}niv., {O}ttawa, {O}nt., 1979)}, volume 831 of {\em Lecture Notes in
  Math.}, pages 137--287. Springer, Berlin, 1980.

\bibitem{ringel1099}
C.~M. Ringel.
\newblock {\em Tame algebras and integral quadratic forms}, volume 1099 of {\em
  Lecture Notes in Mathematics}.
\newblock Springer-Verlag, Berlin, 1984.

\bibitem{aps}
A.~P. Santana.
\newblock The {S}chur algebra {$S(B^+)$} and projective resolutions of {W}eyl
  modules.
\newblock {\em J. Algebra}, 161(2):480--504, 1993.

\bibitem{apsiy}
A.~P. Santana and I.~Yudin.
\newblock Characteristic-free resolutions of {W}eyl and {S}pecht modules.
\newblock {\em Adv. Math.}, 229(4):2578--2601, 2012.

\bibitem{wald}
B.~Wald and J.~Waschb{\"u}sch.
\newblock Tame biserial algebras.
\newblock {\em J. Algebra}, 95(2):480--500, 1985.

\bibitem{woodcock_vanishing}
D.~Woodcock.
\newblock Schur algebras and global bases: new proofs of old vanishing
  theorems.
\newblock {\em J. Algebra}, 191(1):331--370, 1997.

\bibitem{woodcock}
D.~J. Woodcock.
\newblock Borel {S}chur algebras.
\newblock {\em Comm. Algebra}, 22(5):1703--1721, 1994.

\end{thebibliography}

\end{document}